\numberwithin{equation}{section}
\theoremstyle{plain}
\newtheorem{theorem}{Theorem}[section]
\newtheorem{lemma}{Lemma}[section]
\theoremstyle{definition}
\newtheorem{PropQ}{Property}
\newcommand{\optionaldesc}[2]{%
	\phantomsection
	#1\protected@edef\@currentlabel{#1}\label{#2}%
}
\begin{document}
	
\title[Convergence in distribution of the quantile process]{A necessary and sufficient condition for convergence in distribution of the quantile process in $\bm{L^1(0,1)}$}

\author{Brendan K.\ Beare}
\address{School of Economics,
	University of Sydney, City Road.
	Darlington, NSW 2006, Australia.}
\email{brendan.beare@sydney.edu.au}

\author{Tetsuya Kaji}
\address{Booth School of Business, 
	University of Chicago, 5807 S.\ Woodlawn Ave. 
	Chicago, IL 60637, USA.}
\email{tkaji@chicagobooth.edu}

\date{\today}

\maketitle

\begin{center}
Accepted for publication in \emph{Bernoulli}.
\end{center}

\begin{abstract}
We establish a necessary and sufficient condition for the quantile process based on iid sampling to converge in distribution in $L^1(0,1)$. The condition is that the quantile function is locally absolutely continuous and satisfies a slight strengthening of square integrability. If the quantile process converges in distribution then it may be approximated using the bootstrap.
\end{abstract}

\section{Introduction}\label{sec:intro}

Let $L^1(0,1)$ be the space of Lebesgue integrable functions $f:(0,1)\to\mathbb R$ equipped with the norm $\lVert f\rVert_1\coloneqq\smallint_0^1\lvert f(u)\rvert\,\mathrm{d}u$, with the customary understanding that functions differing only on a null set are regarded to be equal. This article establishes a necessary and sufficient condition for the quantile process constructed from an independent and identically distributed (iid) sample to converge in distribution in $L^1(0,1)$. Let $X_1,\dots,X_n$ be iid random variables with cumulative distribution function (cdf) $F:\mathbb R\to[0,1]$ and quantile function $Q:(0,1)\to\mathbb R$. Let $F_n:\mathbb R\to[0,1]$ and $Q_n:(0,1)\to\mathbb R$ be the empirical cdf and empirical quantile function defined by $F_n(x)=n^{-1}\sum_{i=1}^n\mathbbm 1(X_i\leq x)$ and $Q_n(u)=\inf\{x\in\mathbb R:F_n(x)\geq u\}$. We will show that the quantile process $\sqrt{n}(Q_n-Q)$ converges in distribution in $L^1(0,1)$ if and only if $Q$ has the following property.

\begin{PropQ}\label{prop:Q}
	The function $Q:(0,1)\to\mathbb R$ is locally absolutely continuous and satisfies
	\begin{equation}\label{eq:Qmoment}
		\int_{0}^1\sqrt{u(1-u)}\,\mathrm{d}Q(u)<\infty.
	\end{equation}
\end{PropQ}

Local absolute continuity of $Q$ is defined to mean that the restriction of $Q$ to each compact subinterval of $(0,1)$ is absolutely continuous. The integral in \eqref{eq:Qmoment} is defined in the Lebesgue--Stieltjes sense, wherein $Q$ is used to define a unique Radon measure on $(0,1)$. Requiring the integral in \eqref{eq:Qmoment} to be finite is slightly more restrictive than requiring $Q$ to be square integrable: it is shown in \citet[pp.~64--5]{H73} that if $Q$ satisfies \eqref{eq:Qmoment} then $\smallint_0^1Q(u)^2\,\mathrm{d}u<\infty$, and that if there exists $\delta>0$ such that $\smallint_0^1Q(u)^2[\log(1+\lvert Q(u)\rvert)]^{1+\delta}\,\mathrm{d}u<\infty$ then $Q$ satisfies \eqref{eq:Qmoment}.

Prior literature has established a necessary and sufficient condition for the empirical process $\sqrt{n}(F_n-F)$ to converge in distribution in $L^1(\mathbb R)$. Specifically, Theorem 2.1 in \cite{BGM99} establishes that $\sqrt{n}(F_n-F)$ converges in distribution in $L^1(\mathbb R)$ if and only if
\begin{equation*}
	\int_{-\infty}^\infty\sqrt{F(x)(1-F(x))}\,\mathrm{d}x<\infty.
\end{equation*}
The latter condition is, by Lebesgue--Stieltjes substitution, equivalent to \eqref{eq:Qmoment} and therefore strictly weaker than Property \ref{prop:Q}. Our demonstration that Property \ref{prop:Q} is necessary and sufficient for $\sqrt{n}(Q_n-Q)$ to converge in distribution in $L^1(0,1)$ makes it easy to construct examples in which $\sqrt{n}(F_n-F)$ converges in distribution in $L^1(\mathbb R)$ but $\sqrt{n}(Q_n-Q)$ does not converge in distribution in $L^1(0,1)$. Any $Q$ that is not locally absolutely continuous but satisfies \eqref{eq:Qmoment} will do the job. A very simple example is obtained by choosing $Q$ to be the quantile function for the Bernoulli distribution. An example in which $Q$ is continuous but not locally absolutely continuous is obtained by choosing $Q$ to be the middle-thirds Cantor function or ``Devil's staircase''; see e.g.\ \citet[p.~170]{T11}. Either choice of $Q$ is sufficiently irregular to ensure that $\sqrt{n}(Q_n-Q)$ does not converge in distribution in $L^1(0,1)$. This may be surprising since the sequence of real-valued random variables $\sqrt{n}\smallint_0^1\lvert Q_n(u)-Q(u)\rvert\,\mathrm{d}u$ does converge in distribution for both of these choices of $Q$ and, more generally, for any choice of $Q$ satisfying \eqref{eq:Qmoment}. The latter convergence in distribution follows from the identity $\smallint_0^1\lvert Q_n(u)-Q(u)\rvert\,\mathrm{d}u=\smallint_{-\infty}^\infty\lvert F_n(x)-F(x)\rvert\,\mathrm{d}x$.

It may be illuminating to reflect on why convergence in distribution fails when sampling from the Bernoulli distribution with success probability $p\in(0,1)$. Let $p_n$ be the fraction of successes in a sample of size $n$. Then we have $Q(u)=\mathbbm 1(u>1-p)$ and $Q_n(u)=\mathbbm 1(u>1-p_n)$, and a simple argument using the central limit theorem shows that
\begin{equation*}
	\int_{1-p-\epsilon}^{1-p+\epsilon}\sqrt{n}(Q_n(u)-Q(u))\,\mathrm{d}u\rightsquigarrow N(0,p(1-p))\quad\text{for every small }\epsilon>0.
\end{equation*}
There exists no $L^1(0,1)$-valued random variable $\mathscr Q$ that satisfies
\begin{equation*}
	\int_{1-p-\epsilon}^{1-p+\epsilon}\mathscr Q(u)\,\mathrm{d}u\sim N(0,p(1-p))\quad\text{for every small }\epsilon>0
\end{equation*}
because the last integral must converge to zero as $\epsilon$ decreases to zero. Thus $\sqrt{n}(Q_n-Q)$ cannot converge in distribution to any random variable in $L^1(0,1)$. It is the discontinuity of $Q$ at $1-p$ that leads to the failure of convergence in distribution in this example. Property \ref{prop:Q} does not require $F$ to be continuous, but does require $F$ not to have flat segments within its convex support, as the height of each such segment is a discontinuity point of $Q$. In cases where $Q$ has a discontinuity point, the Lebesgue--Stieltjes measure generated by $Q$ assigns positive mass to that point, leading to irregular behavior of the quantile process. In cases where $Q$ is continuous but not locally absolutely continuous, the corresponding Lebesgue--Stieltjes measure is atomless but nevertheless assigns positive mass to a Lebesgue null set, leading to similar irregularity.

Property \ref{prop:Q} is strictly weaker than sufficient conditions for convergence in distribution of the quantile process in $L^1(0,1)$ appearing in prior literature. Proposition 1.4 in \cite{K18} establishes that the quantile process converges in distribution in $L^1(0,1)$ if (i) $F$ has at most finitely many discontinuities and is elsewhere continuously differentiable with positive derivative, and (ii) $\lvert Q\rvert^{2+\delta}$ is integrable for some $\delta>0$. Lemma A.18 in the online appendix to \cite{WPTM24} establishes that the quantile process converges in distribution in $L^1(0,1)$ if (a) $F$ is supported on a compact interval $[a,b]$ and (b) $F$ is continuously differentiable on $[a,b]$ with positive derivative on $(a,b)$. Both sets of conditions imply, but are not implied by, Property \ref{prop:Q}.

To prove the sufficiency of Property \ref{prop:Q} we take as a starting point the convergence in distribution of $\sqrt{n}(F_n-F)$ in $L^1(\mathbb R)$ supplied by Theorem 2.1(a) in \cite{BGM99}. From this we deduce the convergence in distribution of $\sqrt{n}(Q_n-Q)$ in $L^1(0,1)$ by applying the delta-method. The validity of a bootstrap approximation to the quantile process is established as a byproduct of the delta-method. We use a separate argument to prove the necessity of Property \ref{prop:Q}, drawing on Theorem 2.1(b) in \cite{BGM99}, which establishes that $\sqrt{n}\smallint_{-\infty}^\infty\lvert F_n(x)-F(x)\rvert\,\mathrm{d}x$ is stochastically bounded if and only if \eqref{eq:Qmoment} is satisfied.

A large portion of the technical work done in this article is devoted to verifying the requisite Hadamard differentiability for our application of the delta-method. We use the standard definition of Hadamard differentiability stated in \citet[p.~296--7]{V98}. The idea of using the delta-method to study the behavior of the quantile process is, of course, not new, and can be traced back to \cite{V72} and \cite{R76}. What makes the application of the delta-method challenging in our setting is the generality of Property \ref{prop:Q}. Indeed, in our verification of Hadamard differentiability we assume only that $Q$ is locally absolutely continuous and integrable. The stronger integrability condition \eqref{eq:Qmoment} is needed to obtain the convergence in distribution of $\sqrt{n}(F_n-F)$ in $L^1(\mathbb R)$ but plays no role in verifying Hadamard differentiability of the mapping to quantile functions.

The contribution of this article is closely related to prior research on the approximation of a special construction of the uniform quantile process under weighted $L^p$-norms. In particular, the following fact is established in \cite{CHS93}. Let $U_{1,n}\leq U_{2,n}\leq\cdots\leq U_{n,n}$ be the order statistics of $n$ iid draws from the uniform distribution on $(0,1)$, and let $U_n:(0,1)\to\mathbb R$ be defined by
\begin{equation*}\label{eq:ueqf}
	U_n(u)=U_{k,n},\quad k/(n+2)<u\leq(k+1)/(n+2),\quad k=0,\dots,n+1,
\end{equation*}
where $U_{0,n}=0$ and $U_{n+1,n}=1$. Let $p\in(0,\infty)$, and let $w:(0,1)\to(0,\infty)$ be Borel measurable and locally bounded. Then
\begin{equation}\label{eq:wmoment}
	\int_0^1(u(1-u))^{p/2}w(u)\,\mathrm{d}u<\infty
\end{equation}
if and only if we can define a sequence of Brownian bridges $B_n$ such that
\begin{equation}\label{eq:strongapprox}
	\int_0^1\left|\sqrt{n}(u-U_n(u))-B_n(u)\right|^pw(u)\,\mathrm{d}u=o_\mathrm{P}(1).
\end{equation}
This result is contained in Theorem 1.2 in \cite{CHS93}; see also Theorem 1 in \citet[p.~470]{SW86}. Note that $U_n$ differs from the standard construction of an empirical quantile function, which is to take the generalized inverse of an empirical cdf, i.e.\ $Q_n(u)=\inf\{x:F_n(x)\geq u\}$. Instead, $U_n$ has been defined in such a way that $U_n(u)=0$ for all $u\in(0,1/(n+2))$ and $U_n(u)=1$ for all $u\in((n+1)/(n+2),1)$. If $U_n$ were defined to be an empirical quantile function in the standard way then it would be uniformly bounded away from zero and one almost surely for all $n$, and consequently the integral in \eqref{eq:strongapprox} would be almost surely infinite in cases where $w$ is not integrable. The special construction of $U_n$ is therefore critical to the equivalence of \eqref{eq:wmoment} and \eqref{eq:strongapprox}.

Minor variations upon the empirical quantile function $Q_n$ are commonly used in empirical research. For instance, in \citet{P79} it is suggested to use the continuous and piecewise linear estimator
\begin{equation*}
	\check{Q}_n(u)=(k-nu)X_{(k-1),n}+(nu-k+1)X_{k,n},\quad (k-1)/n\leq u\leq k/n,\quad k=1,\dots,n,
\end{equation*}
where $X_{1,n}\leq X_{2,n}\leq\cdots\leq X_{n,n}$ are the sample order statistics and $X_{0,n}\coloneqq X_{1,n}$. Assume that $Q\in L^1(0,1)$. A simple calculation shows that
\begin{equation}\label{eq:Parzen}
	\lVert\check{Q}_n-Q_n\rVert_1=\frac{X_{n,n}-X_{1,n}}{2n}\quad\text{and}\quad\int_0^1(\check{Q}_n(u)-Q(u))\,\mathrm{d}u=\bar{X}_n-\mathrm{E}(X_1)-\frac{X_{n,n}-X_{1,n}}{2n}.
\end{equation}
If $Q\in L^2(0,1)$ then $X_{1,n},X_{n,n}=o_\mathrm{P}(n^{1/2})$ and so the first equality in \eqref{eq:Parzen} implies that $\lVert\check{Q}_n-Q_n\rVert_1=o_\mathrm{p}(n^{-1/2})$. Thus our results establish that if $Q\in L^2(0,1)$ then Property \ref{prop:Q} is necessary and sufficient for $\sqrt{n}(\check{Q}_n-Q)$ to converge in distribution in $L^1(0,1)$. On the other hand, if $Q\notin L^2(0,1)$ then Property \ref{prop:Q} is not satisfied and it may be shown using the second equality in \eqref{eq:Parzen} that $\sqrt{n}\smallint_0^1(\check{Q}_n(u)-Q(u))\,\mathrm{d}u$ does not converge in distribution. Similar arguments apply to other common variants of $Q_n$ discussed in \citet{HF96}.

Our discussion of related research has barely scratched the surface of the enormous volume of relevant literature on the quantile process. We make no attempt at a survey, but point to \citet{C83} and \citet{CH93} for detailed treatments with a particular emphasis on strong approximations with respect to a uniform or weighted uniform norm, and to \cite{M84}, \cite{BGU05} and \cite{HKW21} for discussions focused on $L^2$-norms. We also point to \citet{ZH20} for a treatment based on the concepts of epi- and hypoconvergence introduced in \citet{BSV14}.

The remainder of this article is structured as follows. Section \ref{sec:diff} is concerned with the verification of Hadamard differentiability. Building on Section \ref{sec:diff}, we show in Section \ref{sec:Q} that Property \ref{prop:Q} is necessary and sufficient for convergence in distribution of the quantile process in $L^1(0,1)$. Appendix \ref{sec:geniv} presents three lemmas summarizing well-known properties of generalized inverses. We appeal to these lemmas at numerous points throughout Sections \ref{sec:diff} and \ref{sec:Q}.

\section{Hadamard differentiability of the generalized inverse}\label{sec:diff}

In this section we establish that a mapping from modified cdfs to quantile functions is Hadamard differentiable under conditions implied by Property \ref{prop:Q}. The precise statement is given in Theorem \ref{thm:inversediff}.

Let $D(\mathbb R)$ be the space of real c\`{a}dl\`{a}g functions on $\mathbb R$, and let $D^1(\mathbb R)$ be the subspace of integrable $h\in D(\mathbb R)$ equipped with the norm $\lVert h\rVert_1\coloneqq\smallint_{-\infty}^\infty\lvert h(x)\rvert\,\mathrm{d}x$. For each cdf $G\in D(\mathbb R)$, let $G^{-1}:(0,1)\to\mathbb R$ be the generalized inverse of $G$ defined by
\begin{equation*}\label{eq:geninv}
	G^{-1}(u)=\inf\{x\in\mathbb R:G(x)\geq u\}\quad\text{for all }u\in(0,1).
\end{equation*}
We call $G^{-1}$ the \emph{quantile function} for $G$. Each cdf $G\in D(\mathbb R)$ does not belong to $D^1(\mathbb R)$ because cdfs increase to one. However the modified cdf $\tilde{G}$ defined by
\begin{equation}\label{eq:modcdf}
	\tilde{G}(x)=G(x)-\mathbbm 1(x\geq0)\quad\text{for all }x\in\mathbb R
\end{equation}
belongs to $D^1(\mathbb R)$ if and only if $G^{-1}\in L^1(0,1)$. Note that \eqref{eq:modcdf} defines a one-to-one correspondence between cdfs and their modifications. We elaborate on the purpose of the modification to cdfs at the end of this section.

For each cdf $F\in D(\mathbb R)$ we define
\begin{gather}
	\alpha_F\coloneqq\inf\{x\in\mathbb R:F(x)>0\},\quad\beta_F\coloneqq\sup\{x\in\mathbb R:F(x)<1\},\label{eq:alphaFbetaF}\\
	\text{and}\quad\mathbb D_F\coloneqq\left\{\tilde{G}\in D^1(\mathbb R):G\text{ is a cdf, }G(\alpha_F-)=0\text{ and }G(\beta_F+)=1\right\}.\notag
\end{gather}
Note that $\alpha_F$ and $\beta_F$ need not be finite. If $\alpha_F=-\infty$ or if $\beta_F=+\infty$ then it should be understood that the respective condition $G(\alpha_F-)=0$ or $G(\beta_F+)=1$ is automatically satisfied. In the statement of Theorem \ref{thm:inversediff} we will fix a cdf $F$ such that $Q\coloneqq F^{-1}$ is integrable and locally absolutely continuous. Integrability implies that $\tilde{F}\in\mathbb D_F$. Local absolute continuity implies that $Q$ has a nonnegative and locally integrable density $q:(0,1)\to\mathbb R$, uniquely defined up to null sets. We will not assume that $Q$ satisfies \eqref{eq:Qmoment}.

Let $\phi:\mathbb D_F\to L^1(0,1)$ be the map defined for each $\tilde{G}\in\mathbb D_F$ by $\phi(\tilde{G})=G^{-1}$. Our objective is to verify that $\phi$ is Hadamard differentiable at $\tilde{F}$ with respect to a suitable tangent space. Let $C[0,1]$ be the space of real continuous functions on $[0,1]$. Our tangent space will be the subspace of $D^1(\mathbb R)$ defined by
\begin{equation*}
	T_F\coloneqq\left\{h\in D^1(\mathbb R):h=g\circ F\text{ for some }g\in C[0,1]\text{ such that }g(0)=g(1)=0\right\}.
\end{equation*}	

\begin{theorem}\label{thm:inversediff}
	Let $F$ be a cdf such that $Q\coloneqq F^{-1}$ is integrable and locally absolutely continuous, and let $q$ be a density for $Q$. Then $(h\circ Q)q\in L^1(0,1)$ for each $h\in D^1(\mathbb R)$. Moreover, $\phi:\mathbb D_F\subset D^1(\mathbb R)\to L^1(0,1)$ is Hadamard differentiable at $\tilde{F}$ tangentially to $T_F$, with Hadamard derivative $\dot{\phi}_{\tilde{F}}:T_F\to L^1(0,1)$ satisfying $\dot{\phi}_{\tilde{F}}(h)=-(h\circ Q)q$ for each $h\in T_F$. The last equality defines a continuous and linear extension of $\dot{\phi}_{\tilde{F}}$ to all of $D^1(\mathbb R)$.
\end{theorem}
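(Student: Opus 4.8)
The plan is to split the theorem into three parts: the inclusion $h(Q)q\in L^1(0,1)$ together with the continuity and linearity of the extension $h\mapsto -h(Q)q$, which come from one change-of-variables identity; the quasi-Hadamard differentiability of $\phi$ along the prescribed perturbing sequences, which I would reduce by an elementary estimate to differentiability along the fixed direction $h$; and that latter reduction step, which is the analytic heart.

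\emph{The change of variables.} Since $Q=F^{-1}$ is locally absolutely continuous it is continuous, so $F$ has no nondegenerate interval of constancy and is strictly increasing on $(\alpha_F,\beta_F)$; hence $Q(F(x))=x$ for all $x\in(\alpha_F,\beta_F)$. Combining this with the absolute continuity of $Q$ on compact subintervals of $(0,1)$ and the standard relation $Q(u)\le x\iff u\le F(x)$ from Appendix \ref{sec:geniv}, one verifies that $\int_0^1\mathbbm{1}_{(-\infty,c]}(Q(u))\,q(u)\,\mathrm{d}u=c-\alpha_F=\int_{\alpha_F}^{\beta_F}\mathbbm{1}_{(-\infty,c]}(x)\,\mathrm{d}x$ for every $c\in(\alpha_F,\beta_F)$, and then, by a monotone class argument, $\int_0^1\psi(Q(u))\,q(u)\,\mathrm{d}u=\int_{\alpha_F}^{\beta_F}\psi(x)\,\mathrm{d}x$ for every Borel $\psi\colon\mathbb R\to[0,\infty]$. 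Taking $\psi=\lvert h\rvert$ gives $\lVert h(Q)q\rVert_{L^1(0,1)}\le\lVert h\rVert_{L^1(-\infty,\infty)}$, so $h(Q)q\in L^1(0,1)$ for every $h\in D^1(-\infty,\infty)$, and $h\mapsto -h(Q)q$ is a $1$-Lipschitz, hence continuous, linear map from $D^1(-\infty,\infty)$ into $L^1(0,1)$ restricting to $\dot\phi_F$ on $T_F$ (and it is $(\mathcal B^\circ,\tilde{\mathcal B})$-measurable since $D^1(-\infty,\infty)$ is separable). Finally, each $h\in T_F$ is of the form $g(F)$ with $g(0)=g(1)=0$, so $h$ vanishes off $[\alpha_F,\beta_F]$ and the same identity in fact gives the exact equality $\lVert h(Q)q\rVert_{L^1(0,1)}=\lVert h\rVert_{L^1(-\infty,\infty)}$ for such $h$; I will use this below.

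\emph{Reduction to a fixed direction.} Fix $h\in T_F$, a vanishing sequence $(t_n)$ of positive reals, and a sequence $(h_n)$ in $D^1(-\infty,\infty)$ with $\lVert h_n-h\rVert_{L^1(-\infty,\infty)}\to0$ and $G_n\coloneqq F+t_nh_n\in\mathbb D_F$; I must show $t_n^{-1}(G_n^{-1}-Q)\to -h(Q)q$ in $L^1(0,1)$. I start from the representation
\[
	G^{-1}(u)-F^{-1}(u)=\int_{\mathbb R}\bigl[\mathbbm{1}(G(x)<u)-\mathbbm{1}(F(x)<u)\bigr]\,\mathrm{d}x,
\]
valid for any cdf $G$ with integrable generalized inverse, since $\mathbbm{1}(F(x)<u)=\mathbbm{1}(x<F^{-1}(u))$ and likewise for $G$. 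Applying it with $G=G_n$, writing $\mathbbm{1}(G_n(x)<u)=\mathbbm{1}(F(x)<u-t_nh_n(x))$, and setting
\[
	\tilde\psi_n(u)\coloneqq\frac1{t_n}\int_{\mathbb R}\bigl[\mathbbm{1}(F(x)<u-t_nh(x))-\mathbbm{1}(F(x)<u)\bigr]\,\mathrm{d}x,
\]
Tonelli's theorem and the bound $\int_0^1\lvert\mathbbm{1}(u>a)-\mathbbm{1}(u>b)\rvert\,\mathrm{d}u\le\lvert a-b\rvert$ give $\lVert t_n^{-1}(G_n^{-1}-Q)-\tilde\psi_n\rVert_{L^1(0,1)}\le\lVert h_n-h\rVert_{L^1(-\infty,\infty)}\to0$; the same computation gives $\lVert\tilde\psi_n\rVert_{L^1(0,1)}\le\lVert h\rVert_{L^1(-\infty,\infty)}=\lVert h(Q)q\rVert_{L^1(0,1)}$. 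It therefore remains to prove $\tilde\psi_n\to -h(Q)q$ in $L^1(0,1)$, and by this norm bound, Fatou's lemma, and the signed form of Scheff\'e's lemma (almost everywhere convergence together with convergence of the $L^1$-norms to that of the limit implies convergence in $L^1$), it suffices to prove $\tilde\psi_n(u)\to -h(Q(u))q(u)$ for almost every $u\in(0,1)$.

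\emph{The pointwise limit, and the main obstacle.} I would fix $u\in(0,1)$ at which $Q$ is differentiable with $Q'(u)=q(u)$ and which is a Lebesgue point of $q$, write $x_0\coloneqq Q(u)$ and $M\coloneqq\lVert g\rVert_\infty$, and argue by cases. If $q(u)=0$, then since $\mathbbm{1}(F(x)<u-t_nh(x))$ and $\mathbbm{1}(F(x)<u)$ differ only for $x$ with $\lvert F(x)-u\rvert\le t_nM$, the relation $Q(u)\le x\iff u\le F(x)$ gives $\lvert\tilde\psi_n(u)\rvert\le t_n^{-1}\bigl(Q(u+t_nM)-Q(u-t_nM)\bigr)=2M\cdot\tfrac{1}{2t_nM}\int_{u-t_nM}^{u+t_nM}q(v)\,\mathrm{d}v\to 2Mq(u)=0$. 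If $q(u)>0$, then for almost every such $u$ the function $F$ is continuous at $x_0$ (a jump of $F$ at $x_0$ would make $Q$ constant near $u$, forcing $q(u)=0$), so $F(x_0)=u$ and $g(u)=g(F(x_0))=h(x_0)=:\gamma$; using that $g$ is uniformly continuous, I replace $h(x)$ by $\gamma$ inside the integral defining $\tilde\psi_n(u)$, the error being controlled---through the relation $Q(u)\le x\iff u\le F(x)$ and the differentiability of $Q$ at $u$---by a term of order $o(1)$, while the main term is $t_n^{-1}\bigl(Q(u-t_n\gamma)-Q(u)\bigr)\to -\gamma q(u)=-h(Q(u))q(u)$. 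The delicate ingredient in this second case---and the step I expect to be the main obstacle---is the almost everywhere reciprocal differentiability relationship between $F$ and $Q$, in particular that $F(Q(u))=u$ for almost every $u$ with $q(u)>0$, which has to be extracted from the absolute continuity of $Q$ together with the properties of the generalized inverse in Appendix \ref{sec:geniv}. Combining the two cases yields the almost everywhere convergence of $\tilde\psi_n$, hence $\tilde\psi_n\to -h(Q)q$ and then $t_n^{-1}(G_n^{-1}-Q)\to -h(Q)q$ in $L^1(0,1)$, which is exactly quasi-Hadamard differentiability of $\phi$ at $F$ tangentially to $T_F\langle D^1(-\infty,\infty)\rangle$ with derivative $\dot\phi_F(h)=-h(Q)q$.
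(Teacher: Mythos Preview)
Your proposal is correct and takes a genuinely different route from the paper for the core estimate. Both proofs open with the same change-of-variables identity for the extension $h\mapsto -h(Q)q$, and both reduce from the perturbing sequence $(h_n)$ to the fixed direction $h$ by the same Fubini/Tonelli bound; after the substitution $v=F(x)$ your $\tilde\psi_n$ is exactly the paper's $\zeta_n$. The divergence is in showing $\tilde\psi_n\to -h(Q)q$ in $L^1(0,1)$. The paper does this by direct estimation over roughly three pages: it introduces auxiliary kernels $\chi_n$ and mollified versions $\chi_{n,\epsilon}$, an approximating process $\zeta_{n,\epsilon}$, splits $(0,1)$ into a good set $A_\epsilon=\{u\in(\epsilon,1-\epsilon):\lvert g(u)\rvert>2\epsilon+\sqrt\epsilon\}$ and its complement, and bounds many integrals separately. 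Your route is far shorter: the inequality $\lVert\tilde\psi_n\rVert_1\le\lVert h\rVert_1=\lVert h(Q)q\rVert_1$ (the equality exploiting $h\in T_F$, so that $h$ vanishes off $[\alpha_F,\beta_F]$) combines with almost-everywhere pointwise convergence to force, via Fatou, $\lVert\tilde\psi_n\rVert_1\to\lVert h(Q)q\rVert_1$, after which the signed Scheff\'e lemma delivers $L^1$ convergence. Your pointwise argument is sound; the ``delicate ingredient'' you flag---that $F(Q(u))=u$ for almost every $u$ with $q(u)>0$---follows because each jump of $F$ creates a flat piece of $Q$ on whose interior $q$ vanishes, leaving only a countable and hence null boundary. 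What your approach buys is brevity and conceptual clarity; what the paper's approach may buy is a template that could transfer to $L^p$ norms or weighted settings where the crucial norm equality $\lVert\tilde\psi_n\rVert_1\le\lVert h(Q)q\rVert_1$ enabling Scheff\'e is unavailable.
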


Our proof of Theorem \ref{thm:inversediff} is lengthy due to the fact that we have placed no structure on the quantile function $Q$ beyond integrability and local absolute continuity. A much shorter proof could be provided if we were to assume, say, that $Q$ admits a density $q$ that is uniformly bounded away from zero and infinity. Such an assumption must be avoided because it is not implied by Property \ref{prop:Q}.

We begin with a series of three lemmas. The first lemma takes care of the first and last assertions of Theorem \ref{thm:inversediff}.

\begin{lemma}\label{lem:inversediff}
	Let $F$ be a cdf such that $Q\coloneqq F^{-1}$ is integrable and locally absolutely continuous, and let $q$ be a density for $Q$. Then $(h\circ Q)q\in L^1(0,1)$ for each $h\in D^1(\mathbb R)$, and $h\mapsto -(h\circ Q)q$ is continuous and linear as a map from $D^1(\mathbb R)$ into $L^1(0,1)$.
\end{lemma}
\begin{proof}
	By Lemma \ref{lemma:LS} we have
	\begin{equation*}
		\int_0^1\lvert h(Q(u))\rvert\,\mathrm{d}Q(u)=\int_{\alpha_F}^{\beta_F}\lvert h(Q(F(x)))\rvert\,\mathrm{d}x\quad\text{for each }h\in D^1(\mathbb R).
	\end{equation*}
	We may replace $\,\mathrm{d}Q(u)$ with $q(u)\,\mathrm{d}u$ in the first integral because $Q$ is locally absolutely continuous with density $q$. Local absolute continuity of $Q$ implies continuity of $Q$, and $0<F(x)<1$ for all $x\in(\alpha_F,\beta_F)$, so by Lemma \ref{lem:geninv}\ref{en:Finvcts} we may replace $Q(F(x))$ with $x$ in the second integral. Consequently
	\begin{equation}\label{eq:halphabeta}
		\int_{0}^{1}\lvert h(Q(u))\rvert q(u)\,\mathrm{d}u=\int_{\alpha_F}^{\beta_F}\lvert h(x)\rvert\,\mathrm{d}x\quad\text{for each }h\in D^1(\mathbb R).
	\end{equation}
	Thus $(h\circ Q)q\in L^1(0,1)$ for each $h\in D^1(\mathbb R)$. Linearity of $h\mapsto -(h\circ Q)q$ as a map from $D^1(\mathbb R)$ into $L^1(0,1)$ is obvious. Continuity of this map follows from the fact that $\lVert (h\circ Q)q\rVert_1\leq\lVert h\rVert_1$ for each $h\in D^1(\mathbb R)$, by \eqref{eq:halphabeta}.
\end{proof}

The second lemma provides an expression relating an additive perturbation to a cdf $F$ to the corresponding perturbation to the quantile function for $F$.
\begin{lemma}\label{lem:perturb}
	Let $F$ be a cdf such that $Q\coloneqq F^{-1}$ is continuous. Let $G$ be a cdf such that $G(\alpha_F-)=0$ and $G(\beta_F+)=1$. Let $h=G-F$. Then
	\begin{equation*}
		G^{-1}(u)-Q(u)=\int_0^1[\mathbbm{1}(v\geq u)-\mathbbm{1}(v+h(Q(v))\geq u)]\,\mathrm{d}Q(v)\quad\text{for each }u\in(0,1).
	\end{equation*}
\end{lemma}
\begin{proof}
	First we establish that
	\begin{equation}
		G^{-1}(u)-Q(u)=\int_{\alpha_F}^{\beta_F}[\mathbbm{1}(F(x)\geq u)-\mathbbm{1}(G(x)\geq u)]\,\mathrm{d}x\quad\text{for each }u\in(0,1).\label{eq:kaji}
	\end{equation}
	Observe that for each $u\in(0,1)$ we have
	\begin{align*}
		\int_{-\infty}^{Q(u)\vee G^{-1}(u)}\mathbbm 1(F(x)\geq u)\,\mathrm{d}x&=Q(u)\vee G^{-1}(u)-Q(u)\\
		\text{and}\quad\int_{-\infty}^{Q(u)\vee G^{-1}(u)}\mathbbm 1(G(x)\geq u)\,\mathrm{d}x&=Q(u)\vee G^{-1}(u)-G^{-1}(u).
	\end{align*}
	Taking the difference gives
	\begin{equation}
		\int_{-\infty}^{Q(u)\vee G^{-1}(u)}[\mathbbm 1(F(x)\geq u)-\mathbbm 1(G(x)\geq u)]\,\mathrm{d}x=G^{-1}(u)-Q(u).\label{eq:kaji2}
	\end{equation}
	Since $G(\alpha_F-)=0$ we have $\mathbbm 1(F(x)\geq u)=\mathbbm 1(G(x)\geq u)=0$ for all $x<\alpha_F$. Therefore we may replace the lower limit of integration in \eqref{eq:kaji2} with $\alpha_F$. We also have $\mathbbm 1(F(x)\geq u)=\mathbbm 1(G(x)\geq u)=1$ for all $x\geq Q(u)\vee G^{-1}(u)$ by Lemma \ref{lem:geninv}\ref{en:FFinviff}, and also for all $x\geq\beta_F$ because $G(\beta_F+)=1$. Therefore we may replace the upper limit of integration in \eqref{eq:kaji2} with $\beta_F$. This establishes \eqref{eq:kaji}.
	
	Since $Q$ is continuous, we have $Q(F(x))=x$ for all $x\in(\alpha_F,\beta_F)$ by Lemma \ref{lem:geninv}\ref{en:Finvcts}. Thus we may rewrite \eqref{eq:kaji} as
	\begin{align*}
		G^{-1}(u)-Q(u)=\int_{\alpha_F}^{\beta_F}[\mathbbm{1}(F(x)\geq u)-\mathbbm{1}(F(x)+h(Q(F(x)))\geq u)]\,\mathrm{d}x
	\end{align*}
	for each $u\in(0,1)$. Now the assertion to be proved follows from an application of Lemma \ref{lemma:LS}.
\end{proof}

The third lemma shows that the right-hand side of the equality asserted in Lemma \ref{lem:perturb} may be understood to define a Lipschitz continuous map from $D^1(\mathbb R)$ to $L^1(0,1)$.
\begin{lemma}\label{lem:Lipschitz}
	Let $F$ be a cdf such that $Q\coloneqq F^{-1}$ is continuous. Then
	\begin{equation*}
		\int_0^1\int_0^1\big|\mathbbm{1}(v+h_1(Q(v))\geq u)-\mathbbm{1}(v+h_2(Q(v))\geq u)\big|\,\mathrm{d}Q(v)\,\mathrm{d}u\leq\lVert h_1-h_2\rVert_1
	\end{equation*}
	for all $h_1,h_2\in D^1(\mathbb R)$.
\end{lemma}
\begin{proof}
	First observe that, for each $v\in(0,1)$,
	\begin{equation*}
		\int_0^1\big|\mathbbm{1}(v+h_1(Q(v))\geq u)-\mathbbm{1}(v+h_2(Q(v))\geq u)\big|\,\mathrm{d}u=\lvert h_1(Q(v))-h_2(Q(v))\rvert.
	\end{equation*}
	It therefore follows from Fubini's theorem that
	\begin{multline*}
		\int_0^1\int_0^1\big|\mathbbm{1}(v+h_1(Q(v))\geq u)-\mathbbm{1}(v+h_2(Q(v))\geq u)\big|\,\mathrm{d}Q(v)\,\mathrm{d}u\\=\int_0^1\big|h_1(Q(v))-h_2(Q(v))\big|\,\mathrm{d}Q(v).
	\end{multline*}
	Use Lemma \ref{lemma:LS} to obtain
	\begin{equation*}
		\int_0^1\big|h_1(Q(v))-h_2(Q(v))\big|\,\mathrm{d}Q(v)=\int_{\alpha_F}^{\beta_F}\big|h_1(Q(F(x)))-h_2(Q(F(x)))\big|\,\mathrm{d}x,
	\end{equation*}
	and then use Lemma \ref{lem:geninv}\ref{en:Finvcts} and the continuity of $Q$ to obtain
	\begin{equation*}
		\int_{\alpha_F}^{\beta_F}\big|h_1(Q(F(x)))-h_2(Q(F(x)))\big|\,\mathrm{d}x=\int_{\alpha_F}^{\beta_F}\big|h_1(x)-h_2(x)\big|\,\mathrm{d}x\leq\lVert h_1-h_2\rVert_1.\qedhere
	\end{equation*}
\end{proof}

\begin{proof}[Proof of Theorem \ref{thm:inversediff}]
	In view of Lemma \ref{lem:inversediff} it suffices to show that $\phi:\mathbb D_F\to L^1(0,1)$ is Hadamard differentiable at $\tilde{F}$ tangentially to $T_F$, with Hadamard derivative given by $\dot{\phi}_{\tilde{F}}(h)=-(h\circ Q)q$ for each $h\in T_F$. Let $(t_n)$ be a vanishing sequence of positive real numbers, and let $(h_n)$ be a sequence in $D^1(\mathbb R)$ satisfying $\tilde{F}+t_nh_n\in\mathbb D_F$ for each $n$ and $\lVert h_n-h\rVert_1\to0$ for some $h\in T_F$. Note that $F+t_nh_n$ is a cdf with integrable quantile function and that $\tilde{F}+t_nh_n$ is the modification of this cdf in the sense of \eqref{eq:modcdf}. Note also that $(h\circ Q)q\in L^1(0,1)$, by Lemma \ref{lem:inversediff}. Our task is to show that
	\begin{equation}
		\left\Vert t_n^{-1}[(F+t_nh_n)^{-1}-Q]+(h\circ Q)q\right\Vert_1\to0.\label{eq:qgoal}
	\end{equation}
	Define $\xi_n\coloneqq t_n^{-1}[(F+t_nh_n)^{-1}-Q]\in L^1(0,1)$. Lemma \ref{lem:perturb} establishes that
	\begin{equation*}\label{eq:substitution}
		\xi_n(u)=t_n^{-1}\int_0^1[\mathbbm{1}(v\geq u)-\mathbbm{1}(v+t_nh_n(Q(v))\geq u)]\,\mathrm{d}Q(v)\quad\text{for each }u\in(0,1).
	\end{equation*}
	Let $\check{g}$ be a function in $C[0,1]$ satisfying $\check{g}\circ F=h$ and $\check{g}(0)=\check{g}(1)=0$. Such a function exists because $h\in T_F$. Let $g$ be the restriction of $\check{g}$ to $(0,1)$. Define
	\begin{equation}\label{eq:zeta}
		\zeta_n(u)\coloneqq t_n^{-1}\int_0^1[\mathbbm{1}(v\geq u)-\mathbbm{1}(v+t_ng(v)\geq u)]\,\mathrm{d}Q(v)\quad\text{for each }u\in(0,1).
	\end{equation}
	Note that the integrand is zero outside of a compact subset of $(0,1)$ because $g(v)\to0$ as $v$ approaches either endpoint of $(0,1)$. Thus the integral is well-defined. Moreover, the integral is unaffected if $g(v)$ is replaced with $h(Q(v))$, by Lemma \ref{lem:FQ}. It therefore follows from Lemma \ref{lem:Lipschitz} that
	\begin{equation*}
		\int_0^1\lvert\xi_n(u)-\zeta_n(u)\rvert\,\mathrm{d}u\leq\lVert h_n-h\rVert_1.
	\end{equation*}
	Consequently $\zeta_n\in L^1(0,1)$. Moreover, since $\lVert h_n-h\rVert_1\to0$, we deduce that \eqref{eq:qgoal} is satisfied if and only if $\lVert\zeta_n+(h\circ Q)q\rVert_1\to0$. Since $\lVert (h\circ Q)q-gq\rVert_1=0$ by Lemma \ref{lem:FQ}, the proof is complete if we can show that $\lVert\zeta_n+gq\rVert_1\to0$.
	
	We divide our remaining arguments into four steps. In the first step we introduce auxiliary functions $\chi_n$, $\chi_{n,\epsilon}$ and $\zeta_{n,\epsilon}$, and establish relevant properties. In the second and third steps we use the auxiliary functions to separately bound the integral of $\lvert\zeta_n+gq\rvert$ over two complementary subsets of $(0,1)$. The subset dealt with in the second step contains points that are close to zero or one or at which $g$ is close to zero, and the subset dealt with in the third step contains points not close to zero or one at which $g$ is not close to zero. In the fourth step we bring everything together to show that $\left\Vert\zeta_n+gq\right\Vert_1\to0$.
	
	\smallskip		
	{\em Step 1. Introduce auxiliary functions and establish relevant properties.}
	
	Define the function $\chi_n:(0,1)^2\to\{0,1\}$ by
	\begin{equation*}
		\chi_n(u,v)\coloneqq\mathbbm 1\big(v+t_n[g(v)\wedge0]<u\leq v+t_n[g(v)\vee0]\big).
	\end{equation*}
	For each $u$ and $v$ in $(0,1)$ we have
	\begin{equation*}
		\mathrm{sgn}(g(v))\chi_n(u,v)=\mathbbm 1(v+t_ng(v)\geq u)-\mathbbm 1(v\geq u),
	\end{equation*}
	where $\mathrm{sgn}$ is the signum function. It therefore follows from \eqref{eq:zeta} that
	\begin{equation}
		\zeta_n(u)=-t_n^{-1}\int_0^1\chi_n(u,v)\mathrm{sgn}(g(v))\,\mathrm{d}Q(v).\label{eq:zetachi}
	\end{equation}
	For each $\epsilon>0$, define the function $\chi_{n,\epsilon}:(0,1)^2\to\mathbb R$ by
	\begin{equation*}
		\chi_{n,\epsilon}(u,v)\coloneqq\begin{dcases}\chi_{n}(u,v)\bigg/\int_0^1\chi_{n}(u,w)\,\mathrm{d}w&\text{if }\int_0^1\chi_{n}(u,w)\,\mathrm{d}w\geq t_n\sqrt{\epsilon}\\0&\text{otherwise,}\end{dcases}
	\end{equation*}
	and the function $\zeta_{n,\epsilon}:(0,1)\to\mathbb R$ by
	\begin{equation*}
		\zeta_{n,\epsilon}(u)\coloneqq-\int_0^1\chi_{n,\epsilon}(u,v)g(v)\,\mathrm{d}Q(v).
	\end{equation*}
	
	It will be useful to establish some inequalities involving the function $\chi_n$. First observe that, for each $v\in(0,1)$, $\chi_n(\cdot,v)$ is the indicator function of a set contained in an interval of length $t_n\lvert g(v)\vert$. Thus
	\begin{equation}
		\int_0^1\chi_n(u,v)\,\mathrm{d}u\leq t_n\lvert g(v)\rvert\quad\text{for all }n\in\mathbb N,\,v\in(0,1).\label{eq:chiduup}
	\end{equation}
	For each $\epsilon>0$, define
	\begin{align*}
		\delta_\epsilon&\coloneqq\sup\{\delta\in[0,1):\lvert g(v)-g(u)\rvert\leq\epsilon\text{ for all }u,v\in(0,1)\text{ such that }\lvert v-u\rvert\leq\delta\}\\
		\text{and }\quad N_{\epsilon,1}&\coloneqq\inf\{N\in\mathbb N:t_n\lvert g(u)\rvert<\delta_\epsilon\text{ for all }n\geq N,\,u\in(0,1)\}.		
	\end{align*}
	We have $\delta_\epsilon>0$ for every $\epsilon>0$ because $g$ is uniformly continuous, and thus $N_{\epsilon,1}<\infty$ for every $\epsilon>0$ because $g$ is uniformly bounded and $t_n\to0$. Observe that
	\begin{equation}
		\lvert v-u\rvert\leq t_n\lvert g(v)\rvert\quad\text{for all }n\in\mathbb N\text{ and }u,v\in(0,1)\text{ such that }\chi_n(u,v)\neq0.\label{eq:vu}
	\end{equation}
	Consequently,
	\begin{equation}
		\lvert g(v)-g(u)\rvert\leq\epsilon\quad\text{for all }\epsilon>0,\,n\geq N_{\epsilon,1}\text{, and }u,v\in(0,1)\text{ such that }\chi_n(u,v)\neq0.\label{eq:gepsilon}
	\end{equation}
	Let $\lambda$ denote the Lebesgue measure. We deduce from \eqref{eq:gepsilon} that for each $\epsilon>0$, each $n\geq N_{\epsilon,1}$, and each $u\in(0,1)$,
	\begin{align}
		\int_0^1\chi_{n}(u,v)\,\mathrm{d}v&=\int_0^1\mathbbm 1\big(\lvert g(v)-g(u)\rvert\leq\epsilon\big)\chi_{n}(u,v)\,\mathrm{d}v\notag\\
		&=\lambda\big\{v\in(0,1):v+t_n[g(v)\wedge0]<u\leq v+t_n[g(v)\vee0],\,\lvert g(v)-g(u)\rvert\leq\epsilon\big\}\notag\\
		&\leq\lambda\big\{v\in(0,1):v+t_n[(g(u)-\epsilon)\wedge0]<u\leq v+t_n[(g(u)+\epsilon)\vee0]\big\}\notag\\
		&=\lambda\big\{v\in(0,1):u-t_n[(g(u)+\epsilon)\vee0]\leq v< u-t_n[(g(u)-\epsilon)\wedge0]\big\}\notag\\
		&\leq\lambda\big\{v\in(0,1):u-t_n[g(u)\vee0]-t_n\epsilon\leq v< u-t_n[g(u)\wedge0]+t_n\epsilon\big\}.\label{eq:bounds1}
	\end{align}
	Thus
	\begin{equation}
		\int_0^1\chi_{n}(u,v)\,\mathrm{d}v\leq t_n\lvert g(u)\rvert+2t_n\epsilon\quad\text{for all }\epsilon>0,\,n\geq N_{\epsilon,1},\,u\in(0,1).\label{eq:chidvup}
	\end{equation}
	Similar to \eqref{eq:bounds1}, for each $\epsilon>0$, each $n\geq N_{\epsilon,1}$, and each $u\in(0,1)$, we have
	\begin{equation}
		\int_0^1\chi_{n}(u,v)\,\mathrm{d}v\geq\lambda\big\{v\in(0,1):u-t_n[g(u)\vee0]+t_n\epsilon\leq v< u-t_n[g(u)\wedge0]-t_n\epsilon\big\}.\label{eq:bounds2}
	\end{equation}
	For each $\epsilon>0$, define
	\begin{equation*}
		N_{\epsilon,2}\coloneqq\inf\{N\in\mathbb N:t_n\lvert g(u)\rvert<\epsilon\text{ for all }n\geq N,\,u\in(0,1)\}.
	\end{equation*}
	We have $N_{\epsilon,2}<\infty$ for each $\epsilon>0$ because $g$ is uniformly bounded and $t_n\to0$. For each $\epsilon\in(0,1/2)$, each $n\geq N_{\epsilon,2}$, and each $u\in(\epsilon,1-\epsilon)$ we have
	\begin{equation*}
		u-t_n[g(u)\vee0]+t_n\epsilon>0\quad\text{and}\quad u-t_n[g(u)\wedge0]-t_n\epsilon<1,
	\end{equation*}
	and thus, if also $n\geq N_{\epsilon,1}$, deduce from \eqref{eq:bounds2} that
	\begin{equation*}
		\int_0^1\chi_{n}(u,v)\,\mathrm{d}v\geq\lambda\big\{v\in\mathbb R:u-t_n[g(u)\vee0]+t_n\epsilon\leq v< u-t_n[g(u)\wedge0]-t_n\epsilon\big\}.
	\end{equation*}
	Consequently, we have
	\begin{equation}
		\int_0^1\chi_{n}(u,v)\,\mathrm{d}v\geq t_n\lvert g(u)\rvert-2t_n\epsilon\quad\text{for all }\epsilon\in(0,1/2),\,n\geq (N_{\epsilon,1}\vee N_{\epsilon,2}),\,u\in(\epsilon,1-\epsilon).\label{eq:chidvlo}
	\end{equation}
	Observe further that, in view of \eqref{eq:vu}, we have
	\begin{equation}
		\int_0^\epsilon\chi_n(u,v)\,\mathrm{d}u=0\quad\text{for all }\epsilon\in(0,1/2),\,n\geq N_{\epsilon,2},\,v\in(2\epsilon,1),\label{eq:chidueps1}
	\end{equation}
	and similarly
	\begin{equation}
		\int_{1-\epsilon}^1\chi_n(u,v)\,\mathrm{d}u=0\quad\text{for all }\epsilon\in(0,1/2),\,n\geq N_{\epsilon,2},\,v\in(0,1-2\epsilon).\label{eq:chidueps2}
	\end{equation}
	
	We will also need to suitably control integrals involving the function $\chi_{n,\epsilon}$. From \eqref{eq:chiduup} and the definition of $\chi_{n,\epsilon}$ we deduce that
	\begin{equation}
		\int_0^1\chi_{n,\epsilon}(u,v)\,\mathrm{d}u\leq\epsilon^{-1/2}\lvert g(v)\rvert\quad\text{for all }\epsilon>0,\,n\in\mathbb N,\,v\in(0,1).\label{eq:modchidu}
	\end{equation}
	For each $\epsilon\in(0,1/2)$ define
	\begin{equation}
		A_{\epsilon}\coloneqq\big\{u\in(\epsilon,1-\epsilon):\lvert g(u)\rvert>2\epsilon+\sqrt{\epsilon}\big\}\label{eq:Adef}
	\end{equation}
	and define $A_{\epsilon}^\mathrm{c}\coloneqq(0,1)\setminus A_\epsilon$. The bound \eqref{eq:chidvlo} implies that
	\begin{equation}
		\int_0^1\chi_{n}(u,v)\,\mathrm{d}v\geq t_n\sqrt{\epsilon}\quad\text{for all }\epsilon\in(0,1/2),\,n\geq (N_{\epsilon,1}\vee N_{\epsilon,2}),\,u\in A_\epsilon.\label{eq:gooddivisor}
	\end{equation}
	We therefore deduce from the definition of $\chi_{n,\epsilon}$ that
	\begin{equation}
		\int_0^1\chi_{n,\epsilon}(u,v)\,\mathrm{d}v=1\quad\text{for all }\epsilon\in(0,1/2),\,n\geq (N_{\epsilon,1}\vee N_{\epsilon,2}),\,u\in A_\epsilon.\label{eq:modchidv}
	\end{equation}
	
	\smallskip		
	{\em Step 2. Bound the integral of $\lvert\zeta_n+gq\rvert$ over $A_{\epsilon}^\mathrm{c}$.}
	
	Let $\epsilon$ be a point in $(0,1/2)$. We will separately bound the integrals of $\lvert\zeta_n\rvert$ and $\lvert gq\rvert$ over $A_{\epsilon}^\mathrm{c}$. For the latter integral we merely use \eqref{eq:Adef} to obtain the bound
	\begin{align}
		\int_{A_\epsilon^\mathrm{c}}\lvert g(u)q(u)\rvert\,\mathrm{d}u&\leq\int_0^1\mathbbm 1\big(\lvert g(u)\rvert\leq2\epsilon+\sqrt{\epsilon}\big)\lvert g(u)\rvert\,\mathrm{d}Q(u)+\int_0^\epsilon\lvert g(u)\rvert\,\mathrm{d}Q(u)\notag\\&\quad\quad+\int_{1-\epsilon}^1\lvert g(u)\rvert\,\mathrm{d}Q(u).\label{eq:Acbound5}
	\end{align}
	Next we bound the former integral. Let $n$ be an integer satisfying $n\geq (N_{\epsilon,1}\vee N_{\epsilon,2})$. Observe that
	\begin{flalign}
		\int_{A_\epsilon^\mathrm{c}}\lvert\zeta_n(u)\rvert\,\mathrm{d}u&=\int_{A_\epsilon^\mathrm{c}}\left|t_n^{-1}\int_0^1\chi_n(u,v)\mathrm{sgn}(g(v))\,\mathrm{d}Q(v)\right|\,\mathrm{d}u&&\text{by \eqref{eq:zetachi}}\notag\\
		&\leq t_n^{-1}\int_0^1\int_{A_\epsilon^\mathrm{c}}\chi_n(u,v)\,\mathrm{d}u\,\,\mathrm{d}Q(v)&&\text{by Fubini's theorem}\notag\\
		&\leq t_n^{-1}\int_0^1\int_0^1\mathbbm 1\big(\lvert g(u)\rvert\leq2\epsilon+\sqrt{\epsilon}\big)\chi_n(u,v)\,\mathrm{d}u\,\,\mathrm{d}Q(v)\notag\\
		&\quad\quad+\mathrlap{t_n^{-1}\int_0^1\int_0^\epsilon\chi_n(u,v)\,\mathrm{d}u\,\,\mathrm{d}Q(v)}&&\notag\\
		&\quad\quad+t_n^{-1}\int_0^1\int_{1-\epsilon}^1\chi_n(u,v)\,\mathrm{d}u\,\,\mathrm{d}Q(v)&&\text{by \eqref{eq:Adef}.}\label{eq:Acbound1}
	\end{flalign}
	We bound the three terms in the upper bound in \eqref{eq:Acbound1} in turn. For the first term, we have
	\begin{equation*}
		\int_0^1\mathbbm 1\big(\lvert g(u)\rvert\leq2\epsilon+\sqrt{\epsilon}\big)\chi_n(u,v)\,\mathrm{d}u\leq\int_0^1\mathbbm 1\big(\lvert g(v)\rvert\leq3\epsilon+\sqrt{\epsilon}\big)\chi_n(u,v)\,\mathrm{d}u
	\end{equation*}
	by \eqref{eq:gepsilon}, and thus
	\begin{equation}
		t_n^{-1}\int_0^1\int_0^1\mathbbm 1\big(\lvert g(u)\rvert\leq2\epsilon+\sqrt{\epsilon}\big)\chi_n(u,v)\,\mathrm{d}u\,\,\mathrm{d}Q(v)\leq\int_0^1\mathbbm 1\big(\lvert g(v)\rvert\leq3\epsilon+\sqrt{\epsilon}\big)\lvert g(v)\rvert\,\mathrm{d}Q(v)\label{eq:Acbound2}
	\end{equation}
	by \eqref{eq:chiduup}. For the second term, we have
	\begin{align}
		t_n^{-1}\int_0^1\int_0^\epsilon\chi_n(u,v)\,\mathrm{d}u\,\,\mathrm{d}Q(v)&= t_n^{-1}\int_0^{2\epsilon}\int_0^\epsilon\chi_n(u,v)\,\mathrm{d}u\,\,\mathrm{d}Q(v)&&\text{by \eqref{eq:chidueps1}}\notag\\
		&\leq\int_0^{2\epsilon}\lvert g(v)\rvert\,\mathrm{d}Q(v)&&\text{by \eqref{eq:chiduup}.}\label{eq:Acbound3}
	\end{align}
	For the third term, a similar argument using \eqref{eq:chidueps2} in place of \eqref{eq:chidueps1} shows that
	\begin{equation}
		t_n^{-1}\int_0^1\int_{1-\epsilon}^1\chi_n(u,v)\,\mathrm{d}u\,\,\mathrm{d}Q(v)\leq\int_{1-2\epsilon}^1\lvert g(v)\rvert\,\mathrm{d}Q(v).\label{eq:Acbound4}
	\end{equation}
	Combining \eqref{eq:Acbound1}--\eqref{eq:Acbound4}, we obtain
	\begin{align*}
		\int_{A_\epsilon^\mathrm{c}}\lvert\zeta_n(u)\rvert\,\mathrm{d}u&\leq\int_0^1\mathbbm 1\big(\lvert g(u)\rvert\leq3\epsilon+\sqrt{\epsilon}\big)\lvert g(u)\rvert\,\mathrm{d}Q(u)+\int_0^{2\epsilon}\lvert g(u)\rvert\,\mathrm{d}Q(u)\\
		&\quad\quad+\int_{1-2\epsilon}^1\lvert g(u)\rvert\,\mathrm{d}Q(u).
	\end{align*}
	Together with \eqref{eq:Acbound5}, the last inequality establishes that
	\begin{align}
		\int_{A_\epsilon^\mathrm{c}}\lvert\zeta_n(u)+g(u)q(u)\rvert\,\mathrm{d}u&\leq2\int_0^1\mathbbm 1\big(\lvert g(u)\rvert\leq3\epsilon+\sqrt{\epsilon}\big)\lvert g(u)\rvert\,\mathrm{d}Q(u)\notag\\
		&\quad\quad+2\int_0^{2\epsilon}\lvert g(u)\rvert\,\mathrm{d}Q(u)+2\int_{1-2\epsilon}^1\lvert g(u)\rvert\,\mathrm{d}Q(u).\label{eq:Ac}
	\end{align}
	Note that we have required $\epsilon\in(0,1/2)$ and $n\geq (N_{\epsilon,1}\vee N_{\epsilon,2})$.
	
	\smallskip		
	{\em Step 3. Bound the integral of $\lvert\zeta_n+gq\rvert$ over $A_{\epsilon}$.}
	
	Again let $\epsilon$ be a point in $(0,1/2)$ and let $n$ be an integer satisfying $n\geq (N_{\epsilon,1}\vee N_{\epsilon,2})$. We will separately bound the integrals of $\lvert\zeta_n-\zeta_{n,\epsilon}\rvert$ and $\lvert\zeta_{n,\epsilon}+gq\rvert$ over $A_\epsilon$. First we bound the former integral. From \eqref{eq:zetachi} and the definition of $\zeta_{n,\epsilon}$ we have
	\begin{equation*}
		\zeta_n(u)-\zeta_{n,\epsilon}(u)=-\int_0^1[t_n^{-1}\chi_n(u,v)-\chi_{n,\epsilon}(u,v)\lvert g(v)\rvert]\mathrm{sgn}(g(v))\,\mathrm{d}Q(v)
	\end{equation*}
	for all $u\in A_\epsilon$, and thus by Fubini's theorem
	\begin{equation}
		\int_{A_{\epsilon}}\lvert\zeta_n(u)-\zeta_{n,\epsilon}(u)\rvert\,\mathrm{d}u\leq\int_0^1\int_{A_{\epsilon}}\left|t_n^{-1}\chi_n(u,v)-\lvert g(v)\rvert\chi_{n,\epsilon}(u,v)\right|\,\mathrm{d}u\,\,\mathrm{d}Q(v).\label{eq:A1bound1}
	\end{equation}
	In view of \eqref{eq:gooddivisor} and the definition of $\chi_{n,\epsilon}$, the inner integral on the right-hand side satisfies
	\begin{equation*}
		\int_{A_{\epsilon}}\left|t_n^{-1}\chi_n(u,v)-\lvert g(v)\rvert\chi_{n,\epsilon}(u,v)\right|\,\mathrm{d}u=\int_{A_{\epsilon}}\chi_{n,\epsilon}(u,v)\left|t_n^{-1}\int_0^1\chi_n(u,w)\,\mathrm{d}w-\lvert g(v)\rvert\right|\,\mathrm{d}u.
	\end{equation*}
	Thus, by the triangle inequality,
	\begin{multline}
		\int_{A_{\epsilon}}\left|t_n^{-1}\chi_n(u,v)-\lvert g(v)\rvert\chi_{n,\epsilon}(u,v)\right|\,\mathrm{d}u\leq\int_{A_{\epsilon}}\chi_{n,\epsilon}(u,v)\bigl\vert\lvert g(u)\rvert-\lvert g(v)\rvert\bigr\vert\,\mathrm{d}u\\
		+\int_{A_\epsilon}\chi_{n,\epsilon}(u,v)\left|t_n^{-1}\int_0^1\chi_n(u,w)\,\mathrm{d}w-\lvert g(u)\rvert\right|\,\mathrm{d}u.\label{eq:A1bound2}
	\end{multline}
	The definition of $\chi_{n,\epsilon}$ implies that $\chi_{n,\epsilon}(u,v)=0$ whenever $\chi_n(u,v)=0$, so from \eqref{eq:gepsilon} we have
	\begin{equation}
		\int_{A_{\epsilon}}\chi_{n,\epsilon}(u,v)\bigl\vert\lvert g(u)\rvert-\lvert g(v)\rvert\bigr\vert\,\mathrm{d}u\leq\epsilon\int_{A_{\epsilon}}\chi_{n,\epsilon}(u,v)\,\mathrm{d}u.\label{eq:A1bound3}
	\end{equation}
	From \eqref{eq:chidvup} and \eqref{eq:chidvlo} we have
	\begin{equation*}
		\left|t_n^{-1}\int_0^1\chi_n(u,w)\,\mathrm{d}w-\lvert g(u)\rvert\right|\leq2\epsilon\quad\text{for all }u\in A_\epsilon,
	\end{equation*}
	and thus
	\begin{equation}
		\int_{A_\epsilon}\chi_{n,\epsilon}(u,v)\left|t_n^{-1}\int_0^1\chi_n(u,w)\,\mathrm{d}w-\lvert g(u)\rvert\right|\,\mathrm{d}u\leq2\epsilon\int_{A_{\epsilon}}\chi_{n,\epsilon}(u,v)\,\mathrm{d}u.\label{eq:A1bound4}
	\end{equation}
	Consequently,
	\begin{align}
		\int_{A_{\epsilon}}\left|t_n^{-1}\chi_n(u,v)-\lvert g(v)\rvert\chi_{n,\epsilon}(u,v)\right|\,\mathrm{d}u&\leq3\epsilon\int_{A_{\epsilon}}\chi_{n,\epsilon}(u,v)\,\mathrm{d}u&&\text{by \eqref{eq:A1bound2}--\eqref{eq:A1bound4}}\notag\\
		&\leq3\sqrt{\epsilon}\lvert g(v)\rvert&&\text{by \eqref{eq:modchidu}.}\label{eq:A1bound5}
	\end{align}
	Combining \eqref{eq:A1bound1} and \eqref{eq:A1bound5}, we obtain
	\begin{equation}
		\int_{A_{\epsilon}}\lvert\zeta_n(u)-\zeta_{n,\epsilon}(u)\rvert\,\mathrm{d}u\leq3\sqrt{\epsilon}\int_0^1\lvert g(u)\rvert\,\mathrm{d}Q(u).\label{eq:A1}
	\end{equation}
	
	Next we bound the integral of $\lvert\zeta_{n,\epsilon}+gq\rvert$ over $A_\epsilon$. From \eqref{eq:modchidv} and the definition of $\zeta_{n,\epsilon}$ we have
	\begin{equation*}
		\zeta_{n,\epsilon}(u)+g(u)q(u)=-\int_0^1\chi_{n,\epsilon}(u,v)[g(v)q(v)-g(u)q(u)]\,\mathrm{d}v
	\end{equation*}
	for all $u\in A_\epsilon$, and thus
	\begin{equation}
		\int_{A_{\epsilon}}\lvert\zeta_{n,\epsilon}(u)+g(u)q(u)\rvert\,\mathrm{d}u\leq\int_{A_{\epsilon}}\int_0^1\chi_{n,\epsilon}(u,v)\lvert g(v)q(v)-g(u)q(u)\rvert\,\mathrm{d}v\,\mathrm{d}u.\label{eq:A2bound1}
	\end{equation}
	Define the function $f:(0,1)\to\mathbb R$ by $f(u)\coloneqq g(u)q(u)$. Since $f\in L^1(0,1)$, there exists (see e.g.\ Theorem 1.3.20 in \citealp{T11}) a uniformly continuous function $f_\epsilon:(0,1)\to\mathbb R$ such that $\lVert f_\epsilon-f\rVert_1\leq\epsilon$. Using the triangle inequality and Fubini's theorem, we deduce from \eqref{eq:A2bound1} that
	\begin{align}
		\int_{A_{\epsilon}}\lvert\zeta_{n,\epsilon}(u)+g(u)q(u)\rvert\,\mathrm{d}u&\leq\int_0^1\int_{A_{\epsilon}}\chi_{n,\epsilon}(u,v)\lvert f_\epsilon(v)-f(v)\rvert\,\mathrm{d}u\,\mathrm{d}v\notag\\
		&\quad+\int_{A_{\epsilon}}\int_0^1\chi_{n,\epsilon}(u,v)\lvert f_\epsilon(u)-f(u)\rvert\,\mathrm{d}v\,\mathrm{d}u\notag\\
		&\quad+\int_{A_{\epsilon}}\int_0^1\chi_{n,\epsilon}(u,v)\lvert f_\epsilon(v)-f_\epsilon(u)\rvert\,\mathrm{d}v\,\mathrm{d}u.\notag
	\end{align}
	From \eqref{eq:modchidu} we have
	\begin{align*}
		\int_0^1\int_{A_{\epsilon}}\chi_{n,\epsilon}(u,v)\lvert f_\epsilon(v)-f(v)\rvert\,\mathrm{d}u\,\mathrm{d}v&\leq\epsilon^{-1/2}\int_0^1\lvert g(v)\rvert\lvert f_\epsilon(v)-f(v)\rvert\,\mathrm{d}v\leq\sqrt{\epsilon}\sup_{v\in(0,1)}\lvert g(v)\rvert,
	\end{align*}
	and from \eqref{eq:modchidv} we have
	\begin{equation*}
		\int_{A_{\epsilon}}\int_0^1\chi_{n,\epsilon}(u,v)\lvert f_\epsilon(u)-f(u)\rvert\,\mathrm{d}v\,\mathrm{d}u=\int_{A_c}\lvert f_\epsilon(v)-f(v)\rvert\,\mathrm{d}v\leq\epsilon.
	\end{equation*}
	Thus
	\begin{equation*}
		\int_{A_{\epsilon}}\lvert\zeta_{n,\epsilon}(u)+g(u)q(u)\rvert\,\mathrm{d}u\leq\sqrt{\epsilon}\sup_{u\in(0,1)}\lvert g(u)\rvert+\epsilon+\int_{A_{\epsilon}}\int_0^1\chi_{n,\epsilon}(u,v)\lvert f_\epsilon(v)-f_\epsilon(u)\rvert\,\mathrm{d}v\,\mathrm{d}u.
	\end{equation*}
	Together with \eqref{eq:A1}, the last inequality establishes that
	\begin{align}
		\int_{A_{\epsilon}}\lvert\zeta_{n}(u)+g(u)q(u)\rvert\,\mathrm{d}u&\leq3\sqrt{\epsilon}\int_0^1\lvert g(u)\rvert\,\mathrm{d}Q(u)+\sqrt{\epsilon}\sup_{u\in(0,1)}\lvert g(u)\rvert+\epsilon\notag\\&\quad+\int_{A_{\epsilon}}\int_0^1\chi_{n,\epsilon}(u,v)\lvert f_\epsilon(v)-f_\epsilon(u)\rvert\,\mathrm{d}v\,\mathrm{d}u.\label{eq:A2}
	\end{align}
	Note again that we have required $\epsilon\in(0,1/2)$ and $n\geq (N_{\epsilon,1}\vee N_{\epsilon,2})$.
	
	\smallskip		
	{\em Step 4. Show that $\lVert\zeta_n-gq\rVert_1\to0$.}
	
	The inequalities \eqref{eq:Ac} and \eqref{eq:A2} together establish the following fact: For every $\epsilon\in(0,1/2)$, there exists an integer $N_\epsilon$ and a uniformly continuous function $f_\epsilon:(0,1)\to\mathbb R$ such that, for every $n\geq N_{\epsilon}$,
	\begin{align*}
		\lVert\zeta_n+gq\rVert_1&\leq2\int_0^1\mathbbm 1\big(\lvert g(u)\rvert\leq3\epsilon+\sqrt{\epsilon}\big)\lvert g(u)\rvert\,\mathrm{d}Q(u)+2\int_0^{2\epsilon}\lvert g(u)\rvert\,\mathrm{d}Q(u)\notag\\
		&\quad+2\int_{1-2\epsilon}^1\lvert g(u)\rvert\,\mathrm{d}Q(u)+3\sqrt{\epsilon}\int_0^1\lvert g(u)\rvert\,\mathrm{d}Q(u)+\sqrt{\epsilon}\sup_{u\in(0,1)}\lvert g(u)\rvert+\epsilon\\
		&\quad+\int_{A_{\epsilon}}\int_0^1\chi_{n,\epsilon}(u,v)\lvert f_\epsilon(v)-f_\epsilon(u)\rvert\,\mathrm{d}v\,\mathrm{d}u.
	\end{align*}
	The first six terms on the right-hand side of this inequality do not depend on $n$. As mentioned above, we have $(h\circ Q)q\in L^1(0,1)$ by Lemma \ref{lem:inversediff} and $\lVert (h\circ Q)q-gq\rVert_1=0$ by Lemma \ref{lem:FQ}, so we know that $\smallint_0^1\lvert g(u)\rvert\,\mathrm{d}Q(u)<\infty$. Thus, by the dominated convergence theorem, the first four terms can be made arbitrarily small by choosing $\epsilon$ sufficiently small. Since $g$ is uniformly bounded, the fifth term can also be made arbitrarily small by choosing $\epsilon$ sufficiently small. The sixth term is simply $\epsilon$. If we can show that the seventh and final term converges to zero as $n\to\infty$ for every $\epsilon\in(0,1/2)$ then we are done.
	
	Fix arbitrary real numbers $\eta>0$ and $\epsilon\in(0,1/2)$. Since $f_\epsilon$ is uniformly continuous, there exists a positive real number $\kappa_{\epsilon,\eta}$ such that
	\begin{equation}
		\lvert f_\epsilon(v)-f_\epsilon(u)\rvert\leq\eta\quad\text{for all }u,v\in(0,1)\text{ such that }\lvert v-u\rvert\leq\kappa_{\epsilon,\eta}.\label{eq:fepsilonmod}
	\end{equation}
	Define
	\begin{equation*}
		M_{\epsilon,\eta}\coloneqq\inf\big\{N\in\mathbb N:t_n\lvert g(u)\rvert\leq\kappa_{\epsilon,\eta}\text{ for all }n\geq N,u\in(0,1)\big\}.
	\end{equation*}
	We have $M_{\epsilon,\eta}<\infty$ because $g$ is uniformly bounded and $t_n\to0$. We deduce from \eqref{eq:vu} that
	\begin{equation}
		\lvert v-u\rvert\leq\kappa_{\epsilon,\eta}\quad\text{for all }n\geq M_{\epsilon,\eta}\text{ and }u,v\in(0,1)\text{ such that }\chi_n(u,v)\neq0.\label{eq:fepsilonmod2}
	\end{equation}
	The definition of $\chi_{n,\epsilon}$ implies that $\chi_{n,\epsilon}(u,v)=0$ whenever $\chi_n(u,v)=0$. Therefore, for all $n\geq M_{\epsilon,\eta}$,
	\begin{align*}
		\int_{A_{\epsilon}}\int_0^1\chi_{n,\epsilon}(u,v)\lvert f_\epsilon(v)-f_\epsilon(u)\rvert\,\mathrm{d}v\,\mathrm{d}u&\leq\eta\int_{A_{\epsilon}}\int_0^1\chi_{n,\epsilon}(u,v)\,\mathrm{d}v\,\mathrm{d}u&&\text{by \eqref{eq:fepsilonmod} and \eqref{eq:fepsilonmod2}}\\
		&=\eta\int_{A_\epsilon}1\,\mathrm{d}u\leq\eta&&\text{by \eqref{eq:modchidv}.}
	\end{align*}
	Since $\eta$ may be chosen arbitrarily small, we are done.
\end{proof}

We close this section with a comment on the modification to cdfs defined in \eqref{eq:modcdf}. The purpose of this modification is to allow us to define the generalized inverse map $\phi$ on a subset of the space $D^1(\mathbb R)$. This would not be possible if we took the domain of $\phi$ to be a set of cdfs because cdfs are not integrable. By defining $\phi$ on a subset of $D^1(\mathbb R)$ we ensure that the norm applied on the tangent space is also defined everywhere on the domain of $\phi$, as is required for Hadamard differentiability. The more general property of quasi-Hadamard differentiability, introduced in \cite{BZ10,BZ16}, does not impose this requirement and could be used to justify our intended application of the delta-method with $\phi$ defined directly on a set of cdfs. However, in view of the fact that the empirical process $\sqrt{n}(F_n-F)$ may always be re-written as $\sqrt{n}(\tilde{F}_n-\tilde{F})$, there seems to be no drawback to defining $\phi$ on a set of modified cdfs rather than on a set of cdfs.

\section{The quantile process}\label{sec:Q}

Following some preliminary discussion in Section \ref{sec:prelimQ}, we will establish in Section \ref{sec:sufficiencyQ} that Property \ref{prop:Q} is sufficient for convergence in distribution of the quantile process in $L^1(0,1)$ and justifies the use of the bootstrap. The necessity of Property \ref{prop:Q} is established in Section \ref{sec:necessityQ}.

\subsection{Preliminary discussion}\label{sec:prelimQ}

Let $(\Omega,\mathcal F,\mathbb P)$ be a probability space, and $(X_n)$ a sequence of random variables $X_n:\Omega\to\mathbb R$. Assume $(X_n)$ is iid with cdf $F$, and define $Q\coloneqq F^{-1}$. For each $n\in\mathbb N$ let $F_n:\Omega\to D(\mathbb R)$ be the empirical distribution function defined by
\begin{equation*}
	F_n(x)=\frac{1}{n}\sum_{i=1}^n\mathbbm 1(X_i\leq x)\quad\text{for all }x\in\mathbb R,
\end{equation*}
and define the empirical quantile function $Q_n\coloneqq F_n^{-1}$.

To define bootstrap counterparts to $F_n$ and $Q_n$ we introduce a second probability space $(\Omega',\mathcal F',\mathbb P')$ on which bootstrap weights will be defined, and define the product probability space
\begin{equation*}
	(\bar{\Omega},\bar{\mathcal F},\bar{\mathbb P})\coloneqq(\Omega\times\Omega',\mathcal F\otimes\mathcal F',\mathbb P\otimes\mathbb P').
\end{equation*}
For simplicity, and because resampling procedures are not the primary focus of this article, we confine attention to the Efron bootstrap. For each $n\in\mathbb N$, let $W_{n,1},\dots,W_{n,n}$ be random variables on $\Omega'$ which take values only in $\{0,1,\dots,n\}$ and whose sum is equal to $n$. Each $n$-tuple $(W_{n,1},\dots,W_{n,n})$ is assumed to have the multinomial distribution based on $n$ draws from the categories $1,\dots,n$, with equal probability assigned to each category. For each $n\in\mathbb N$ let $F_n^\ast:\bar{\Omega}\to D(\mathbb R)$ be defined by
\begin{equation*}
	F^\ast_n(x)=\frac{1}{n}\sum_{i=1}^nW_{n,i}\mathbbm 1(X_i\leq x)\quad\text{for all }x\in\mathbb R.
\end{equation*}
We regard the generalized inverse $Q_n^\ast\coloneqq F_n^{\ast-1}$ to be a map from $\bar{\Omega}$ into $L^1(0,1)$. The maps $F_n^\ast$ and $Q_n^\ast$ may be understood to be Efron bootstrap counterparts to $F_n$ and $Q_n$.

The maps $Q_n$ and $Q_n^\ast$ are $L^1(0,1)$-valued random variables. This follows from a standard argument using Fubini's theorem and the separability of the space $L^1(0,1)$, the details of which we omit. We refer to $\sqrt{n}(Q_n-Q)$ as the \emph{quantile process}. If $Q\in L^1(0,1)$, which must be the case if $Q$ has Property \ref{prop:Q}, then the quantile process is an $L^1(0,1)$-valued random variable. In Section \ref{sec:sufficiencyQ} we will establish that if $Q$ has Property \ref{prop:Q} then the quantile process converges in distribution to $-qB$ in $L^1(0,1)$, where $B$ is a Brownian bridge. The following lemma due to \cite{CHS93} shows that if $Q$ has Property \ref{prop:Q} then $qB$ is integrable with probability one.
\begin{lemma}\label{lem:CHS}
	Let $B$ be a Brownian bridge. Then $\smallint_0^1\lvert B(u)\rvert\,\mathrm{d}Q(u)<\infty$ with probability one if and only if $Q$ satisfies \eqref{eq:Qmoment}.
\end{lemma}
Lemma \ref{lem:CHS} is obtained by applying Theorem 2.1 in \cite{CHS93} with $\xi$ a Brownian bridge and $\mu$ the Lebesgue--Stieltjes measure generated by $Q$. Condition (2.14) of Theorem 2.1 is satisfied with $r=2$ and $C=2/\pi$ because the square of the first absolute moment of a normal random variable with mean zero and variance $\sigma^2$ is $2\sigma^2/\pi$.

\subsection{Sufficiency of Property \ref{prop:Q}}\label{sec:sufficiencyQ}

In this section we establish the following result. The definitions of convergence in distribution ($\rightsquigarrow$) and of convergence in distribution conditionally on the data are the standard ones stated in \citet[258--259, 333]{V98} for sequences of maps into a metric space.
\begin{theorem}\label{theorem:QCLT}
	Assume that $Q$ has Property \ref{prop:Q}, let $q$ be a density for $Q$, and let $B$ be a Brownian bridge. Then $\sqrt{n}(Q_n-Q)\rightsquigarrow-qB$ in $L^1(0,1)$ and
	\begin{equation*}
		\left\Vert\sqrt{n}(Q_n-Q)+q\cdot\sqrt{n}(F_n\circ Q-F\circ Q)\right\Vert_1\to0\quad\text{in probability.}
	\end{equation*}
	Moreover, $\sqrt{n}(Q_n^\ast-Q_n)\rightsquigarrow-qB$ in $L^1(0,1)$ conditionally on the data in probability.
\end{theorem}

We will prove Theorem \ref{theorem:QCLT} by applying the delta-method. Any application of the delta-method requires a probabilistic ingredient and an analytic ingredient. Our analytic ingredient is the Hadamard differentiability of the generalized inverse map supplied by Theorem \ref{thm:inversediff}. Our probabilistic ingredient is supplied by Lemma \ref{lem:FCLT1}. The part of Lemma \ref{lem:FCLT1} not concerning the bootstrap is essentially contained in Theorem 2.1(a) in \cite{BGM99}, while the part concerning the bootstrap is established in the online appendix to \cite{BCM24}.
\begin{lemma}\label{lem:FCLT1}
	Assume that $Q$ satisfies \eqref{eq:Qmoment}, and let $B$ be a Brownian bridge. Then $\sqrt{n}(F_n-F)\rightsquigarrow B\circ F$ in $D^1(\mathbb R)$, and $\sqrt{n}(F_n^\ast-F_n)\rightsquigarrow B\circ F$ in $D^1(\mathbb R)$ conditionally on the data almost surely.
\end{lemma}

\begin{proof}[Proof of Theorem \ref{theorem:QCLT}]
	
	Begin by writing $\sqrt{n}(Q_n-Q)=\sqrt{n}(\phi(\tilde{F}_n)-\phi(\tilde{F}))$, where $\phi$ is the generalized inverse map appearing in Theorem \ref{thm:inversediff} and where $\tilde{F}_n$ and $\tilde{F}$ are the modifications to $F_n$ and $F$ constructed as in \eqref{eq:modcdf}. Since $\tilde{F}_n-\tilde{F}=F_n-F$ we have $\sqrt{n}(\tilde{F}_n-\tilde{F})\rightsquigarrow B\circ F$ in $D^1(\mathbb R)$ by Lemma \ref{lem:FCLT1}. Now the first two assertions of Theorem \ref{theorem:QCLT} follow from an application of the delta-method---see Theorem 20.8 in \citet[p.~297]{V98}---with Theorem \ref{thm:inversediff} supplying the requisite Hadamard differentiability of $\phi$. Note that $B\circ F$ belongs to the tangent space $T_F$ because the sample paths of a Brownian bridge are continuous and tied to zero. Note also that $\dot{\phi}_{\tilde{F}}(B\circ F)=-qB$ because $\lVert(B\circ F\circ Q-B)q\rVert_1=0$ by Lemma \ref{lem:FQ}. A similar argument using the delta-method for the bootstrap---see Theorem 23.9 in \citet[p.~333]{V98}---establishes the final assertion of Theorem \ref{theorem:QCLT}.
\end{proof}

We remark in passing that the Hadamard differentiability of $\phi$ can be used in conjunction with the results in \cite{V91} to establish an optimality property of the empirical quantile function. It is shown in that article that the property of asymptotic efficiency is preserved under Hadamard differentiable transformations. The empirical cdf is asymptotically efficient when the set of permitted distributions is unrestricted, and remains so when that set is restricted to those distributions having Property \ref{prop:Q}, because doing so does not affect the corresponding tangent space of score functions. Consequently the empirical quantile function inherits the asymptotic efficiency of the empirical cdf under Property \ref{prop:Q}, as does any Hadamard differentiable transformation of the empirical quantile function.

\subsection{Necessity of Property \ref{prop:Q}}\label{sec:necessityQ}

In this section we establish the following result.
\begin{theorem}\label{theorem:QCLT2}
	Assume that $Q\in L^1(0,1)$, and that there exists an $L^1(0,1)$-valued random variable $\mathscr Q$ such that $\sqrt{n}(Q_n-Q)\rightsquigarrow\mathscr Q$ in $L^1(0,1)$. Then $Q$ has Property \ref{prop:Q}.
\end{theorem}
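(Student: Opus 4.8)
The plan is to verify the two clauses of Property \ref{prop:Q} in turn. The moment condition \eqref{eq:Qmoment} comes almost immediately from Theorem 2.1(b) of \cite{BGM99}, while local absolute continuity of $Q$ is extracted from the limit law $\mathscr Z$ by examining the real-valued random variables $\sqrt n\int_0^1\psi(u)(Q_n(u)-Q(u))\,\mathrm{d}u$ for $\psi\in C_c^\infty(0,1)$. For the moment condition, recall from Lemma \ref{lem:QCLT2-1} that $\|\sqrt n(Q_n-Q)\|_1=\sqrt n\int_{-\infty}^\infty\lvert F_n(x)-F(x)\rvert\,\mathrm{d}x$ for each $n$; since the norm is continuous on $L^1(0,1)$, the assumed convergence and the continuous mapping theorem make $\sqrt n\int_{-\infty}^\infty\lvert F_n-F\rvert\,\mathrm{d}x$ converge in distribution, hence stochastically bounded, and Theorem 2.1(b) of \cite{BGM99} then yields \eqref{eq:Fmoment}, equivalently \eqref{eq:Qmoment}. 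A consequence we record: $\mathbb E[n\|F_n-F\|_1^2]\le\bigl(\int_{-\infty}^\infty\sqrt{F(x)(1-F(x))}\,\mathrm{d}x\bigr)^2$ for each $n$ by Cauchy--Schwarz applied to $\mathbb E[\lvert F_n(x)-F(x)\rvert\,\lvert F_n(y)-F(y)\rvert]$, the right-hand side being finite by \eqref{eq:Fmoment}, so by Fatou's lemma $\mathbb E\|\mathscr Z\|_1^2\le\liminf_n\mathbb E\|\sqrt n(Q_n-Q)\|_1^2<\infty$.

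Now fix $\psi\in C_c^\infty(0,1)$ and set $\Psi(t)\coloneqq\int_0^t\psi$; as in the proof of Theorem \ref{theorem:QCLT} we may assume the sample lies in $[\alpha_F,\beta_F]$, so that $F_n\in\mathbb D_F$. Arguing exactly as in the derivation of \eqref{eq:kaji}, $Q_n(u)-Q(u)=\int_{\alpha_F}^{\beta_F}[\mathbbm{1}(F(x)\ge u)-\mathbbm{1}(F_n(x)\ge u)]\,\mathrm{d}x$, so multiplying by $\psi(u)$, integrating in $u$, and applying Fubini (legitimate since the double integral is dominated by $\|\psi\|_\infty\int_{\alpha_F}^{\beta_F}\lvert F_n-F\rvert\,\mathrm{d}x<\infty$) gives $\int_0^1\psi(u)(Q_n(u)-Q(u))\,\mathrm{d}u=\int_{\alpha_F}^{\beta_F}[\Psi(F(x))-\Psi(F_n(x))]\,\mathrm{d}x$. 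A second-order Taylor expansion of $\Psi$ then represents $\sqrt n\int_0^1\psi(Q_n-Q)\,\mathrm{d}u$ as $-n^{-1/2}\sum_{i=1}^n Y_i^{(\psi)}$ plus a remainder which is $O_{\mathrm P}(n^{-1/2})$, where $Y_i^{(\psi)}\coloneqq\int_{\alpha_F}^{\beta_F}\psi(F(x))(\mathbbm{1}(X_i\le x)-F(x))\,\mathrm{d}x$ are iid, mean zero, and bounded (the integrand vanishes off the compact $x$-set on which $F(x)$ lies in $\operatorname{supp}\psi$), and the remainder is negligible because $\int(F_n-F)^2$ over that compact set has expectation $O(1/n)$. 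Applying the classical central limit theorem to $\psi$ and to arbitrary linear combinations $\sum_j c_j\psi_j$, together with the Cram\'er--Wold device, yields for every finite family $\psi_1,\dots,\psi_k\in C_c^\infty(0,1)$ that
\begin{gather*}
\Bigl(\sqrt n\int_0^1\psi_j(u)(Q_n(u)-Q(u))\,\mathrm{d}u\Bigr)_{j=1}^k\ \rightsquigarrow\ \mathcal N(0,\Sigma),\\
\Sigma_{ij}=\int_0^1\!\!\int_0^1\psi_i(u)\psi_j(v)\,(u\wedge v-uv)\,\mathrm{d}Q(u)\,\mathrm{d}Q(v),
\end{gather*}
where $\Sigma_{ij}=\operatorname{Cov}(Y_1^{(\psi_i)},Y_1^{(\psi_j)})$ is evaluated using $\operatorname{Cov}(\mathbbm{1}(X_1\le x),\mathbbm{1}(X_1\le y))=F(x\wedge y)-F(x)F(y)$ and the Lebesgue--Stieltjes substitution.

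On the other hand, since each $\psi_j$ is bounded, $h\mapsto(\int_0^1\psi_j h\,\mathrm{d}u)_j$ is a continuous linear map from $L^1(0,1)$ into $\mathbb R^k$, so the hypothesis $\sqrt n(Q_n-Q)\rightsquigarrow\mathscr Z$ forces the same weak limit, whence $(\int_0^1\psi_j\mathscr Z\,\mathrm{d}u)_j\sim\mathcal N(0,\Sigma)$. Because $\mathbb E\|\mathscr Z\|_1^2<\infty$ we may set $R(u,v)\coloneqq\mathbb E[\mathscr Z(u)\mathscr Z(v)]\in L^1((0,1)^2)$ and, by Fubini, $\Sigma_{ij}=\mathbb E\bigl[\bigl(\int_0^1\psi_i\mathscr Z\bigr)\bigl(\int_0^1\psi_j\mathscr Z\bigr)\bigr]=\int_0^1\!\!\int_0^1\psi_i(u)\psi_j(v)R(u,v)\,\mathrm{d}u\,\mathrm{d}v$. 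Comparing the two expressions for $\Sigma_{ij}$ and using that the linear span of $\{\psi_i\otimes\psi_j:\psi_i,\psi_j\in C_c^\infty(0,1)\}$ is uniformly dense in $C_c((0,1)^2)$, we conclude that the Radon measures $(u\wedge v-uv)\,\mathrm{d}Q(u)\,\mathrm{d}Q(v)$ and $R(u,v)\,\mathrm{d}u\,\mathrm{d}v$ on $(0,1)^2$ coincide; in particular $(u\wedge v-uv)\,\mathrm{d}Q\otimes\mathrm{d}Q$ is absolutely continuous with respect to two-dimensional Lebesgue measure. Since $u\wedge v-uv$ is continuous and strictly positive on $(0,1)^2$, if $\mathrm{d}Q$ (which may be assumed not identically zero, else $Q$ is constant and locally absolutely continuous) had a nonzero singular component, concentrated on a Lebesgue-null Borel set $N\subseteq(0,1)$, then $\int_0^1(u\wedge v-uv)\,\mathrm{d}Q(v)>0$ for every $u$ would force $(u\wedge v-uv)\,\mathrm{d}Q\otimes\mathrm{d}Q$ to assign positive mass to the Lebesgue-null set $N\times(0,1)$, a contradiction. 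Hence $\mathrm{d}Q\ll\mathrm{d}u$, i.e.\ $Q$ is locally absolutely continuous, and together with \eqref{eq:Qmoment} this establishes Property \ref{prop:Q}.

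The main obstacle is the step, carried out in the last paragraph, of upgrading the weak limit in $L^1(0,1)$ to the conclusion that $(u\wedge v-uv)\,\mathrm{d}Q\otimes\mathrm{d}Q$ is absolutely continuous on $(0,1)^2$, and thence that $\mathrm{d}Q$ is absolutely continuous on $(0,1)$: the crucial point is that convergence in $L^1$ pins down the entire second-moment measure of the limit $\mathscr Z$, not merely its one-dimensional marginals, which is exactly what excludes a singular part in $Q$. Secondary technical care is needed for the asymptotic linearity of the L-statistics $\int_0^1\psi(Q_n-Q)\,\mathrm{d}u$ (control of the Taylor remainder over the relevant finite family), for the Fubini and Lebesgue--Stieltjes manipulations near the possibly infinite endpoints $\alpha_F$ and $\beta_F$, and for the square-integrability of $\mathscr Z$ used in the covariance computation.
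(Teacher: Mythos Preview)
Your argument is correct and takes a genuinely different route from the paper's. Both proofs dispatch the moment condition \eqref{eq:Qmoment} in the same way (via Lemma \ref{lem:QCLT2-1} and Theorem 2.1(b) of \cite{BGM99}), but diverge completely on local absolute continuity. The paper works with \emph{first} absolute moments: it shows, through Lemma \ref{lem:QCLT2-3} and uniform integrability, the inequality $\mathbb E\int_a^b\lvert\mathscr Z\rvert\,\mathrm du\ge\mathbb E\int_{[a,b]}\lvert B\rvert\,\mathrm dQ=\sqrt{2/\pi}\int_{[a,b]}\sqrt{u(1-u)}\,\mathrm dQ(u)$, and then bounds $\mu_Q(A)$ for finite unions of closed intervals $A$ by splitting $\mathbb E\int_A\lvert\mathscr Z\rvert$ according to whether $\lvert\mathscr Z\rvert\le N$. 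You instead work with \emph{second} moments: by linearising the L-statistics $\sqrt n\int_0^1\psi(Q_n-Q)$ you identify the full covariance structure of the family $(\int\psi\mathscr Z)_{\psi\in C_c^\infty}$, obtaining the exact equality of Radon measures $(u\wedge v-uv)\,\mathrm dQ\otimes\mathrm dQ=R\,\mathrm du\,\mathrm dv$ on $(0,1)^2$, from which $\mathrm dQ\ll\mathrm du$ falls out immediately. Your approach yields more: it shows as a byproduct that all linear functionals $\int\psi\mathscr Z$ are centred Gaussian with the covariance one expects from $-qB$, something the paper never needs to establish. The paper's route, in exchange, avoids the Bochner-integral or joint-measurability manoeuvre needed to make sense of $R(u,v)=\mathbb E[\mathscr Z(u)\mathscr Z(v)]$ as an element of $L^1((0,1)^2)$, and its key inequality is reused verbatim in the P-P necessity proof (Theorem \ref{theorem:RCLT2}).

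Two small points of polish. First, your remainder bound ``$O_{\mathrm P}(n^{-1/2})$'' is slightly glib: the $x$-set on which $\psi'(\xi_x)\neq0$ is not the fixed set $\{x:F(x)\in\operatorname{supp}\psi\}$ but the random set $\{x:\xi_x\in\operatorname{supp}\psi\}$, which can spill over when $F_n$ is far from $F$. The clean fix is to bound the remainder by $\tfrac12\|\psi'\|_\infty\sqrt n\,\|F_n-F\|_\infty\,\|F_n-F\|_1=o_{\mathrm P}(1)$, using $\sqrt n\|F_n-F\|_1=O_{\mathrm P}(1)$ and Glivenko--Cantelli; $o_{\mathrm P}(1)$ is all you need. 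Second, the definition of $R$ requires a jointly measurable version of $\mathscr Z$, or equivalently the Bochner integral $R=\mathbb E[\mathscr Z\otimes\mathscr Z]\in L^1((0,1)^2)$ (well-defined since $\mathbb E\|\mathscr Z\|_1^2<\infty$); this is standard for separable-Banach-space-valued random elements but worth a sentence.
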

To prove Theorem \ref{theorem:QCLT2} we will apply the following lemma. Part \ref{en:QCLT2-2-1} is Theorem 2.1(b) in \cite{BGM99}. Part \ref{en:QCLT2-2-2} follows from Theorem 2.4(a) in \cite{BGM99}.
\begin{lemma}\label{lem:QCLT2-2}
	Assume that $Q\in L^1(0,1)$, and for each $n\in\mathbb N$ define
	\begin{equation*}
		\zeta_n\coloneqq\sqrt{n}\int_{-\infty}^\infty\lvert F_n(x)-F(x)\rvert\,\mathrm{d}x.
	\end{equation*}
	Let $B$ be a Brownian bridge. Then:
	\begin{enumerate}[label=\upshape(\roman*)]
		\item $(\zeta_n)$ is stochastically bounded if and only if $Q$ satisfies \eqref{eq:Qmoment}.\label{en:QCLT2-2-1}
		\item If $Q$ satisfies \eqref{eq:Qmoment} then
		\begin{equation*}
			\lim_{n\to\infty}\mathbb E\zeta_n^2=\mathbb E\left(\int_0^1\lvert B(u)\rvert\,\mathrm{d}Q(u)\right)^2<\infty,
		\end{equation*}
		which implies that $(\zeta_n)$ is uniformly integrable.\label{en:QCLT2-2-2}
	\end{enumerate}
\end{lemma}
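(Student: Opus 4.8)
The plan is to obtain both parts from Theorem 2.1 in \cite{BGM99}, using the translations between the ``$F$-form'' and ``$Q$-form'' of the quantities involved that were noted in Section \ref{sec:intro}. Recall that \eqref{eq:Qmoment} and \eqref{eq:Fmoment} agree as integrals by Lebesgue--Stieltjes substitution (Lemma \ref{lemma:LS}), so \eqref{eq:Qmoment} holds precisely when $\int_{-\infty}^\infty\sqrt{F(x)(1-F(x))}\,\mathrm dx<\infty$, and $\zeta_n=\sqrt n\int_{-\infty}^\infty\lvert F_n(x)-F(x)\rvert\,\mathrm dx$ is exactly the sequence studied in \cite{BGM99}. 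Part \ref{en:QCLT2-2-1} is then immediate from Theorem 2.1(b) of \cite{BGM99}, which asserts that $(\zeta_n)$ is stochastically bounded if and only if \eqref{eq:Fmoment} holds.

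For part \ref{en:QCLT2-2-2}, assume \eqref{eq:Qmoment} and set $\zeta\coloneqq\int_0^1\lvert B(u)\rvert\,\mathrm dQ(u)$. Since $B(\omega,\cdot)$ is continuous on $[0,1]$ and vanishes at $0$ and $1$, Lemma \ref{lemma:LS} gives $\zeta(\omega)=\int_{-\infty}^\infty\lvert B(\omega,F(x))\rvert\,\mathrm dx$ for every $\omega$ (the integrand vanishing off $(\alpha_F,\beta_F)$), so Cauchy--Schwarz and \eqref{eq:Fmoment} yield $\mathbb E\zeta^2\le\bigl(\int_{-\infty}^\infty\sqrt{F(x)(1-F(x))}\,\mathrm dx\bigr)^2<\infty$; in particular $\zeta<\infty$ almost surely, consistently with Lemma \ref{lem:CHS}. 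The substantive claim is $\mathbb E\zeta_n^2\to\mathbb E\zeta^2$, and I would establish it --- as is done, modulo the above translation, in \cite{BGM99} --- by representing $\mathbb E\zeta_n^2=\int_{-\infty}^\infty\int_{-\infty}^\infty n\,\mathbb E\bigl[\lvert F_n(x)-F(x)\rvert\,\lvert F_n(y)-F(y)\rvert\bigr]\,\mathrm dx\,\mathrm dy$ by Tonelli, bounding the integrand, uniformly in $n$, by the $\mathbb R^2$-integrable function $\sqrt{F(x)(1-F(x))}\sqrt{F(y)(1-F(y))}$ via Cauchy--Schwarz and the exact identity $n\,\mathbb E[(F_n(x)-F(x))^2]=F(x)(1-F(x))$, identifying the pointwise-in-$(x,y)$ limit of the integrand as $\mathbb E\bigl[\lvert B(F(x))\rvert\,\lvert B(F(y))\rvert\bigr]$ from the bivariate central limit theorem for the empirical process together with a fixed-$(x,y)$ fourth-moment bound (which supplies the uniform integrability needed to pass to the limit inside the expectation), and concluding by dominated convergence; a final application of Tonelli and Lemma \ref{lemma:LS} rewrites the limiting double integral as $\mathbb E\zeta^2$. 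Since $\mathbb E\zeta_n^2\to\mathbb E\zeta^2<\infty$, we have $\sup_n\mathbb E\zeta_n^2<\infty$, and superlinearity of $t\mapsto t^2$ then gives the uniform integrability of $(\zeta_n)$.

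The step I expect to be the main obstacle is the \emph{convergence} --- not merely the boundedness --- of the second moments: weak convergence of $\zeta_n$ together with $\sup_n\mathbb E\zeta_n^2<\infty$ does not on its own give $\mathbb E\zeta_n^2\to\mathbb E\zeta^2$, because second-moment mass can escape, and there is no easy moment bound forcing uniform integrability of $(\zeta_n^2)$ directly (a Hölder bound would need $\int_{-\infty}^\infty(F(x)(1-F(x)))^{1/4}\,\mathrm dx<\infty$, which is strictly stronger than \eqref{eq:Fmoment}). The double-integral representation circumvents this by relocating the uniform-integrability requirement to the level of each fixed pair $(x,y)$, where it is cheap, and by supplying a clean $n$-independent integrable dominating function. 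Everything else --- the Lebesgue--Stieltjes change of variables, the identification of the Gaussian limit, and the passage between the $F$- and $Q$-forms --- is routine bookkeeping.
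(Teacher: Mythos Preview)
Your proposal is correct and follows essentially the same route as the paper: part \ref{en:QCLT2-2-1} is cited directly from Theorem 2.1(b) in \cite{BGM99}, finiteness of $\mathbb E\zeta^2$ is obtained by Fubini and Cauchy--Schwarz, and uniform integrability follows from the bounded second moments via Liapounov. The only difference is that for the convergence $\mathbb E\zeta_n^2\to\mathbb E\zeta^2$ the paper simply cites Theorem 2.4(a) in \cite{BGM99} with $p=r=2$, whereas you sketch the double-integral/dominated-convergence argument that underlies that result; your sketch is sound and your identification of the key issue (that weak convergence plus $\sup_n\mathbb E\zeta_n^2<\infty$ does not alone yield convergence of second moments) is apt.
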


We require one further lemma for our proof of Theorem \ref{theorem:QCLT2}.
\begin{lemma}\label{lem:QCLT2-3}
	Assume that $Q$ satisfies \eqref{eq:Qmoment}, let $B$ be a Brownian bridge, and let $a,b\in(0,1)$ be continuity points of $Q$ with $a<b$. Then
	\begin{equation*}
		\liminf_{n\to\infty}\mathbb E\sqrt{n}\int_a^b\lvert Q_n(u)-Q(u)\rvert\,\mathrm{d}u\geq\mathbb E\int_{[a,b)}\lvert B(u)\rvert\,\mathrm{d}Q(u).
	\end{equation*}
\end{lemma}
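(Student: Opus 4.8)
The plan is to bound the left-hand side from below using the first inequality of Lemma~\ref{lem:QCLT2-1}, show that the resulting lower bound converges in distribution, pass the $\liminf$ through the expectation by lower semicontinuity of the expectation under convergence in distribution, and finally identify the limiting expectation with $\mathbb E\int_{[a,b)}\lvert B(u)\rvert\,\mathrm{d}Q(u)$ via a Lebesgue--Stieltjes change of variables. Concretely, applying Lemma~\ref{lem:QCLT2-1} with the population cdf $F$ in the role of its ``$F$'' and the empirical cdf $F_n$ in the role of its ``$G$'' gives, pathwise and for every $n$,
\[
	\sqrt n\int_a^b\lvert Q_n(u)-Q(u)\rvert\,\mathrm{d}u\;\geq\;Y_n\coloneqq\int_{Q(a)\vee Q_n(a)}^{Q(b)\wedge Q_n(b)}\sqrt n\,\lvert F_n(x)-F(x)\rvert\,\mathrm{d}x\;\geq\;0 .
\]
Taking expectations, it suffices to prove $\liminf_{n\to\infty}\mathbb E Y_n\geq\mathbb E\int_{[a,b)}\lvert B(u)\rvert\,\mathrm{d}Q(u)$.

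The core step is to show $Y_n\rightsquigarrow Y$ in $\mathbb R$, where $Y\coloneqq\int_{[Q(a),Q(b))}\lvert\mathscr F(x)\rvert\,\mathrm{d}x$ and $\mathscr F$ is the $D^1(-\infty,\infty)$-valued random variable of Lemma~\ref{lem:FCLT1}, given by $\mathscr F(x)=B(F(x))$. Put $Z_n\coloneqq\sqrt n(F_n-F)$; since $Q$ satisfies \eqref{eq:Qmoment} and $B$ is a $Q$-integrable Brownian bridge, Lemma~\ref{lem:FCLT1} gives $Z_n\rightsquigarrow\mathscr F$ in $D^1(-\infty,\infty)$. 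Writing the interval of integration via indicator functions as in the proof of Lemma~\ref{lem:QCLT2-1}, one has $Y_n=\int_{[Q(a),Q(b))}\mathbbm 1(Q_n(a)\le x<Q_n(b))\,\lvert Z_n(x)\rvert\,\mathrm{d}x$. I would split off the principal term $\psi(Z_n)$, where $\psi:D^1(-\infty,\infty)\to\mathbb R$ is the $1$-Lipschitz map $\psi(\phi)\coloneqq\int_{[Q(a),Q(b))}\lvert\phi(x)\rvert\,\mathrm{d}x$: by the continuous mapping theorem $\psi(Z_n)\rightsquigarrow\psi(\mathscr F)=Y$. The remainder $\psi(Z_n)-Y_n=\int_{E_n}\lvert Z_n(x)\rvert\,\mathrm{d}x$, with $E_n\coloneqq[Q(a),Q(b))\setminus[Q_n(a),Q_n(b))$, is nonnegative and at most $\lVert Z_n\rVert_\infty\,\lambda(E_n)$, where $\lambda$ denotes Lebesgue measure. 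Now $\lVert Z_n\rVert_\infty$ is stochastically bounded (the Kolmogorov--Smirnov statistic, or a Dvoretzky--Kiefer--Wolfowitz bound), while $\lambda(E_n)\le\lvert Q_n(a)-Q(a)\rvert+\lvert Q_n(b)-Q(b)\rvert\to0$ almost surely, because $a$ and $b$ are continuity points of $Q$ and hence the sample quantiles converge, $Q_n(a)\to Q(a)$ and $Q_n(b)\to Q(b)$ a.s.\ (Glivenko--Cantelli together with the standard convergence of generalized inverses at continuity points). Thus the remainder tends to $0$ in probability, and a Slutsky argument yields $Y_n\rightsquigarrow Y$.

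Two routine facts then finish the proof. First, since $Y_n\ge0$ and $Y_n\rightsquigarrow Y$, for every $M>0$ we have $\mathbb E Y_n\ge\mathbb E(Y_n\wedge M)\to\mathbb E(Y\wedge M)$ because $y\mapsto y\wedge M$ is bounded and continuous; hence $\liminf_n\mathbb E Y_n\ge\mathbb E(Y\wedge M)$, and letting $M\uparrow\infty$ with monotone convergence gives $\liminf_n\mathbb E Y_n\ge\mathbb E Y$. Second, I would identify $Y$ sample-pointwise: by the Lebesgue--Stieltjes change of variables of Lemma~\ref{lemma:LS},
\[
	\int_{[a,b)}\lvert B(u)\rvert\,\mathrm{d}Q(u)=\int_{\alpha_F}^{\beta_F}\mathbbm 1\bigl(a\le F(x)<b\bigr)\,\lvert B(F(x))\rvert\,\mathrm{d}x ,
\]
and by Lemma~\ref{lem:geninv}\ref{en:FFinviff} the set $\{x:a\le F(x)<b\}$ equals $[Q(a),Q(b))$, so the right-hand side equals $\int_{[Q(a),Q(b))}\lvert B(F(x))\rvert\,\mathrm{d}x=\int_{[Q(a),Q(b))}\lvert\mathscr F(x)\rvert\,\mathrm{d}x=Y$, with $\mathbb E Y<\infty$ by the moment bound in Lemma~\ref{lem:QCLT2-2}\ref{en:QCLT2-2-2}. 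Chaining $\mathbb E\sqrt n\int_a^b\lvert Q_n-Q\rvert\,\mathrm{d}u\ge\mathbb E Y_n$, then $\liminf_n\mathbb E Y_n\ge\mathbb E Y$, then $\mathbb E Y=\mathbb E\int_{[a,b)}\lvert B(u)\rvert\,\mathrm{d}Q(u)$ completes the argument.

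I expect the main obstacle to be the weak-convergence step, and within it the treatment of the random endpoints $Q_n(a)$ and $Q_n(b)$ of the integral defining $Y_n$: one needs both the almost sure convergence of the sample quantiles at continuity points of $Q$ and the fact that $\lVert Z_n\rVert_\infty$ is merely stochastically bounded---so that $\lVert Z_n\rVert_\infty\lambda(E_n)\to0$ in probability---rather than any rate of convergence of $Q_n$, which is unavailable because $Q$ need not admit a density bounded away from zero.
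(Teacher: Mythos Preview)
Your proof is correct and follows essentially the same route as the paper: apply the first inequality of Lemma~\ref{lem:QCLT2-1}, handle the random integration endpoints via a.s.\ convergence of sample quantiles at continuity points of $Q$ combined with stochastic boundedness of $\sqrt{n}\lVert F_n-F\rVert_\infty$, and identify the limit through Lemma~\ref{lem:FCLT1} and the change of variables in Lemma~\ref{lemma:LS}.

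The one genuine difference is in how you pass to the limit in expectation. The paper decomposes the lower bound as $\int_{Q(a)}^{Q(b)}\sqrt{n}\lvert F_n-F\rvert\,\mathrm{d}x$ minus two correction integrals over $[Q_n(c)\wedge Q(c),\,Q_n(c)\vee Q(c)]$ for $c\in\{a,b\}$, shows each piece converges in distribution, and then invokes the uniform integrability supplied by Lemma~\ref{lem:QCLT2-2}\ref{en:QCLT2-2-2} to upgrade all three to convergence in mean. You instead keep $Y_n$ intact, establish $Y_n\rightsquigarrow Y$ by Slutsky, and use the Fatou-type truncation argument $\liminf_n\mathbb E Y_n\geq\sup_M\mathbb E(Y\wedge M)=\mathbb E Y$. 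Your route is slightly more economical: since only a $\liminf$ inequality is claimed, lower semicontinuity of the expectation under weak convergence of nonnegative variables suffices, and you avoid any appeal to uniform integrability. The paper's route, on the other hand, actually yields the sharper conclusion that the principal term converges in mean (not merely in $\liminf$), which is harmless here but would be needed if one wanted an equality rather than an inequality.
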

\begin{proof}
	For each $n\in\mathbb N$ we have
	\begin{align}
		\int_a^b\lvert Q_n(u)-Q(u)\rvert\,\mathrm{d}u&=\int_{-\infty}^\infty\lvert F_n(x)-F(x)\rvert\,\mathrm{d}x-\int_0^a\lvert Q_n(u)-Q(u)\rvert\,\mathrm{d}u\notag\\&\quad\quad-\int_b^1\lvert Q_n(u)-Q(u)\rvert\,\mathrm{d}u\notag\\
		&\geq\int_{-\infty}^\infty\lvert F_n(x)-F(x)\rvert\,\mathrm{d}x-\int_{-\infty}^{Q_n(a)\vee Q(a)}\lvert F_n(x)-F(x)\rvert\,\mathrm{d}x\notag\\
		&\quad\quad-\int_{Q_n(b)\wedge Q(b)}^\infty\lvert F_n(x)-F(x)\rvert\,\mathrm{d}x\notag\\
		&=\int_{Q(a)}^{Q(b)}\lvert F_n(x)-F(x)\rvert\,\mathrm{d}x-\int_{Q_n(a)\wedge Q(a)}^{Q_n(a)\vee Q(a)}\lvert F_n(x)-F(x)\rvert\,\mathrm{d}x\notag\\
		&\quad-\int_{Q_n(b)\wedge Q(b)}^{Q_n(b)\vee Q(b)}\lvert F_n(x)-F(x)\rvert\,\mathrm{d}x.\label{eq:symdiff}
	\end{align}
	By Lemma \ref{lem:FCLT1} and the continuous mapping theorem, and by applying Lemma \ref{lemma:LS},
	\begin{equation}
		\sqrt{n}\int_{Q(a)}^{Q(b)}\lvert F_n(x)-F(x)\rvert\,\mathrm{d}x\rightsquigarrow\int_{[a,b)}\lvert B(u)\rvert\,\mathrm{d}Q(u)\quad\text{in }\mathbb R.\label{eq:symdiffconvd}
	\end{equation}
	For each $n\in\mathbb N$ and each $c\in\{a,b\}$ we have
	\begin{equation*}
		\sqrt{n}\int_{Q_n(c)\wedge Q(c)}^{Q_n(c)\vee Q(c)}\lvert F_n(x)-F(x)\rvert\,\mathrm{d}x\leq\lvert Q_n(c)-Q(c)\rvert\cdot\sup_{x\in\mathbb R}\sqrt{n}\lvert F_n(x)-F(x)\rvert.
	\end{equation*}
	The sequence of random variables $\sup_{x\in\mathbb R}\sqrt{n}\lvert F_n(x)-F(x)\rvert$, $n\in\mathbb N$, converges in distribution in $\mathbb R$ by Donsker's theorem. Moreover, since $a$ and $b$ are continuity points of $Q$, Lemma 21.2 in \cite{V98} and the Glivenko--Cantelli theorem together imply that $Q_n(c)\to Q(c)$ with probability one. Thus, by the Slutsky theorem, for each $c\in\{a,b\}$ we have
	\begin{equation}
		\sqrt{n}\int_{Q_n(c)\wedge Q(c)}^{Q_n(c)\vee Q(c)}\lvert F_n(x)-F(x)\rvert\,\mathrm{d}x\rightsquigarrow0\quad\text{in }\mathbb R.\label{eq:symdiffop1}
	\end{equation}
	In view of the uniform integrability condition established in Lemma \ref{lem:QCLT2-2}\ref{en:QCLT2-2-2}, the statements of convergence in distribution in \eqref{eq:symdiffconvd} and \eqref{eq:symdiffop1} imply statements of convergence in mean:
	\begin{gather}
		\mathbb E\sqrt{n}\int_{Q(a)}^{Q(b)}\lvert F_n(x)-F(x)\rvert\,\mathrm{d}x\to\mathbb E\int_{[a,b)}\lvert B(u)\rvert\,\mathrm{d}Q(u)\label{eq:convmean1}\\
		\text{and}\quad\mathbb E\sqrt{n}\int_{Q_n(c)\wedge Q(c)}^{Q_n(c)\vee Q(c)}\lvert F_n(x)-F(x)\rvert\,\mathrm{d}x\to0\quad\text{for each }c\in\{a,b\}.\label{eq:convmean2}
	\end{gather}
	By combining \eqref{eq:symdiff}, \eqref{eq:convmean1}, and \eqref{eq:convmean2}, we obtain the assertion of Lemma \ref{lem:QCLT2-3}.
\end{proof}

\begin{proof}[Proof of Theorem \ref{theorem:QCLT2}]
	The assumed convergence in distribution in $L^1(0,1)$ implies, via the continuous mapping theorem, that the sequence $\sqrt{n}\smallint_0^1\lvert Q_n(u)-Q(u)\rvert\,\mathrm{d}u$, $n\in\mathbb N$, is stochastically bounded. Since $\smallint_0^1\lvert Q_n(u)-Q(u)\rvert\,\mathrm{d}u=\smallint_{-\infty}^\infty\lvert F_n(x)-F(x)\rvert\,\mathrm{d}x$, this is precisely the sequence $(\zeta_n)$ defined in Lemma \ref{lem:QCLT2-2}. Thus Lemma \ref{lem:QCLT2-2}\ref{en:QCLT2-2-1} establishes that $Q$ satisfies \eqref{eq:Qmoment}.
	
	Let $B$ be a Brownian bridge. Our assumption that $\sqrt{n}(Q_n-Q)\rightsquigarrow\mathscr Q$ in $L^1(0,1)$ implies, via the continuous mapping theorem, that for every $a,b\in[0,1]$ with $a<b$ we have
	\begin{equation*}
		\sqrt{n}\int_a^b\lvert Q_n(u)-Q(u)\rvert\,\mathrm{d}u\rightsquigarrow\int_a^b\lvert\mathscr Q(u)\rvert\,\mathrm{d}u\quad\text{in }\mathbb R.
	\end{equation*}
	Lemma \ref{lem:QCLT2-2}\ref{en:QCLT2-2-2} and the inequality $\smallint_a^b\lvert Q_n(u)-Q(u)\rvert\,\mathrm{d}u\leq\smallint_{-\infty}^\infty\lvert F_n(x)-F(x)\rvert\,\mathrm{d}x$ together imply that the sequence of random variables $\sqrt{n}\smallint_a^b\lvert Q_n(u)-Q(u)\rvert\,\mathrm{d}u$, $n\in\mathbb N$, is uniformly integrable. Thus
	\begin{equation}
		\mathbb E\sqrt{n}\int_a^b\lvert Q_n(u)-Q(u)\rvert\,\mathrm{d}u\to\mathbb E\int_a^b\lvert\mathscr Q(u)\rvert\,\mathrm{d}u<\infty\quad\text{for all }a,b\in[0,1],\,\,a<b.\label{eq:limE}
	\end{equation}
	
	We next establish that
	\begin{equation}
		\mathbb E\int_a^b\lvert\mathscr Q(u)\rvert\,\mathrm{d}u\geq\mathbb E\int_{[a,b]}\lvert B(u)\rvert\,\mathrm{d}Q(u)\quad\text{for all }a,b\in(0,1),\,\,a<b.\label{eq:openineq}
	\end{equation}
	Fix $a,b\in(0,1)$ with $a<b$. Since $Q$ is monotone, it has at most countably many discontinuities. Thus we may choose an increasing sequence $(a_k)$ of continuity points of $Q$ converging to $a$, and a decreasing sequence $(b_k)$ of continuity points of $Q$ converging to $b$. We have $\mathbb E\smallint_0^1\lvert\mathscr Q(u)\rvert\,\mathrm{d}u<\infty$ from \eqref{eq:limE}. This justifies using the dominated convergence theorem to write
	\begin{equation}
		\mathbb E\int_a^b\lvert\mathscr Q(u)\rvert\,\mathrm{d}u=\lim_{k\to\infty}\mathbb E\int_{a_k}^{b_k}\lvert\mathscr Q(u)\rvert\,\mathrm{d}u.\label{eq:closed1}
	\end{equation}
	We have $\mathbb E\smallint_0^1\lvert B(u)\rvert\,\mathrm{d}Q(u)<\infty$ by Lemma \ref{lem:QCLT2-2}\ref{en:QCLT2-2-2} because $Q$ satisfies \eqref{eq:Qmoment}. This justifies using the dominated convergence theorem to write
	\begin{equation}\mathbb E\int_{[a,b]}\lvert B(u)\rvert\,\mathrm{d}Q(u)=\lim_{k\to\infty}\mathbb E\int_{[a_k,b_k)}\lvert B(u)\rvert\,\mathrm{d}Q(u).\label{eq:closed2}
	\end{equation}
	Since each $a_k$ and $b_k$ is a continuity point of $Q$, Lemma \ref{lem:QCLT2-3} and \eqref{eq:limE} together imply that
	\begin{equation}
		\lim_{k\to\infty}\mathbb E\int_{a_k}^{b_k}\lvert\mathscr Q(u)\rvert\,\mathrm{d}u\geq\lim_{k\to\infty}\mathbb E\int_{[a_k,b_k)}\lvert B(u)\rvert\,\mathrm{d}Q(u).\label{eq:closed3}
	\end{equation}	
	Taken together, \eqref{eq:closed1}, \eqref{eq:closed2}, and \eqref{eq:closed3} establish that \eqref{eq:openineq} is satisfied.
	
	Finally we use \eqref{eq:openineq} to show that $Q$ is locally absolutely continuous. Let $\lambda$ be the Lebesgue measure on $(0,1)$, and let $\mu_Q$ be the Lebesgue--Stieltjes measure on $(0,1)$ generated by $Q$. Fix $\eta\in(0,1/2)$, and let $\mathcal I_\eta$ be the set of all finite unions of closed intervals with endpoints in $[\eta,1-\eta]$. It is enough to show that the restriction of $Q$ to $[\eta,1-\eta]$ is absolutely continuous. By definition, such absolute continuity is satisfied if for every $\epsilon>0$ there exists $\delta>0$ such that $\mu_Q(A)\leq\epsilon$ for every $A\in\mathcal I_\eta$ such that $\lambda(A)\leq\delta$.
	
	Fix $\epsilon>0$, and let $A$ be a set in $\mathcal I_\eta$. Since $A\subseteq[\eta,1-\eta]$, and since $u(1-u)\geq\eta(1-\eta)$ for each $u\in[\eta,1-\eta]$, we have
	\begin{equation}
		\mu_Q(A)=\int_{A}1\,\mathrm{d}Q(u)\leq\frac{1}{\sqrt{\eta(1-\eta)}}\int_{A}\sqrt{u(1-u)}\,\mathrm{d}Q(u).\label{eq:repeat1}
	\end{equation}
	The square of the first absolute moment of a normal random variable with mean zero and variance $\sigma^2$ is $2\sigma^2/\pi$. Thus, by Fubini's theorem,
	\begin{equation}
		\int_{A}\sqrt{u(1-u)}\,\mathrm{d}Q(u)=\sqrt{\frac{\pi}{2}}\mathbb E\int_{A}\lvert B(u)\rvert\,\mathrm{d}Q(u).\label{eq:repeat2}
	\end{equation}
	Since $A$ is a finite union of closed intervals, by combining \eqref{eq:openineq}, \eqref{eq:repeat1}, and \eqref{eq:repeat2} we obtain
	\begin{equation*}
		\mu_Q(A)\leq\sqrt{\frac{\pi}{2\eta(1-\eta)}}\mathbb E\int_{A}\lvert\mathscr Q(u)\rvert\,\mathrm{d}u.
	\end{equation*}
	Observe that, for each $N\in\mathbb N$,
	\begin{align*}
		\mathbb E\int_{A}\lvert\mathscr Q(u)\rvert\,\mathrm{d}u&=\mathbb E\int_{A}\mathbbm 1\big(\lvert\mathscr Q(u)\rvert\leq N\big)\lvert\mathscr Q(u)\rvert\,\mathrm{d}u+\mathbb E\int_{A}\mathbbm 1\big(\lvert\mathscr Q(u)\rvert> N\big)\lvert\mathscr Q(u)\rvert\,\mathrm{d}u\notag\\
		&\leq N\lambda(A)+\mathbb E\int_0^1\mathbbm 1\big(\lvert\mathscr Q(u)\rvert> N\big)\lvert\mathscr Q(u)\rvert\,\mathrm{d}u.\label{eq:startarg}
	\end{align*}
	By the dominated convergence theorem,
	\begin{equation*}
		\lim_{N\to\infty}\mathbb E\int_0^1\mathbbm 1\big(\lvert\mathscr Q(u)\rvert> N\big)\lvert\mathscr Q(u)\rvert\,\mathrm{d}u=0.
	\end{equation*}
	Thus there exists $N_0\in\mathbb N$, not depending on $A$, such that
	\begin{equation*}
		\mu_Q(A)\leq\sqrt{\frac{\pi}{2\eta(1-\eta)}}N_0\lambda(A)+\epsilon/2.
	\end{equation*}
	Now if we set
	\begin{equation*}
		\delta=\sqrt{\frac{\eta(1-\eta)}{2\pi N_0^2}}\,\epsilon
	\end{equation*}
	then $\mu_Q(A)\leq\epsilon$ if $\lambda(A)\leq\delta$. Thus the restriction of $Q$ to $[\eta,1-\eta]$ is absolutely continuous.
\end{proof}

\begin{appendix}
	
	\section{Generalized inverses}\label{sec:geniv}
	
	Here we state three lemmas used repeatedly in the arguments above. They describe well-known properties of generalized inverses. The first is contained in Proposition 1 in \citet{EH13}.
	\begin{lemma}\label{lem:geninv}
		Let $F:\mathbb R\to[0,1]$ be a cdf. Then:
		\begin{enumerate}[label=\upshape(\roman*)]
			\item If $x\in\mathbb R$ and $u\in(0,1)$ then $F(x)\geq u$ if and only if $x\geq F^{-1}(u)$.\label{en:FFinviff}
			\item If $u\in(0,1)$ and $u$ belongs to the range of $F$ then $F(F^{-1}(u))=u$.\label{en:FFinvF}
			\item If $F^{-1}$ is continuous then $F^{-1}(F(x))=x$ for all $x\in\mathbb R$ such that $0<F(x)<1$.\label{en:Finvcts}
		\end{enumerate}
	\end{lemma}
	
	The second lemma states a well-known substitution rule for Lebesgue--Stieltjes integrals.
	\begin{lemma}\label{lemma:LS}
		Let $F:\mathbb R\to[0,1]$ be a cdf, and let $Q=F^{-1}$. Let
		\begin{equation*}
			\alpha_F=\inf\{x\in\mathbb R:F(x)>0\}\quad\text{and}\quad\beta_F=\sup\{x\in\mathbb R:F(x)<1\}.
		\end{equation*}
		Let $h:(0,1)\to\mathbb R$ be a Borel measurable function that satisfies $\smallint_0^1\lvert h(u)\rvert\,\mathrm{d}Q(u)<\infty$ or is nonnegative. Then
		\begin{gather*}
			\int_{[a,b)} h(u)\,\mathrm{d}Q(u)=\int_{Q(a)}^{Q(b)} h(F(x))\,\mathrm{d}x\quad\text{for all }a,b\in(0,1)\text{ such that }a<b,\\
			\text{and}\quad\int_0^1h(u)\,\mathrm{d}Q(u)=\int_{\alpha_F}^{\beta_F}h(F(x))\,\mathrm{d}x.
		\end{gather*}
	\end{lemma}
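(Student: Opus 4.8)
The plan is to read the asserted identities off the change-of-variables formula for image (push-forward) measures, after identifying the Lebesgue--Stieltjes measure $\mu_Q$ generated by $Q$ on $(0,1)$ as a push-forward of Lebesgue measure under $F$. First I fix $a,b\in(0,1)$ with $a<b$ and note that $Q(a),Q(b)$ are finite because $F\in\mathbb D(-\infty,\infty)$. Writing $\lambda_{a,b}$ for Lebesgue measure restricted to $[Q(a),Q(b)]$, I consider its push-forward $F_\ast\lambda_{a,b}$ under $x\mapsto F(x)$; since $F$ sends $[Q(a),Q(b))$ into $[a,b)$ by Lemma~\ref{lem:geninv}\ref{en:FFinviff} and $\{Q(b)\}$ is $\lambda$-null, this is a finite Borel measure supported on $[a,b)\subset(0,1)$. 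The image-measure formula gives $\int_{Q(a)}^{Q(b)}h(F(x))\,\mathrm{d}x=\int h\,\mathrm{d}(F_\ast\lambda_{a,b})$ for every nonnegative Borel $h$, and hence --- after splitting a $\mu_Q$-integrable $h$ into positive and negative parts and using the nonnegative case applied to $|h|$ to guarantee finiteness --- for every $\mu_Q$-integrable $h$, provided we know the measure identity $F_\ast\lambda_{a,b}=\mu_Q(\,\cdot\cap[a,b))$. The first assertion of the lemma is exactly this chain of equalities, and the second will come by letting $a\downarrow0$, $b\uparrow1$.

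The measure identity is proved by the uniqueness theorem for measures: $F_\ast\lambda_{a,b}$ and $\mu_Q(\,\cdot\cap[a,b))$ are finite Borel measures on $(0,1)$, so it suffices to check they agree on the $\pi$-system $\{[c,d):0<c<d<1\}$, which generates $\mathcal B((0,1))$. On the $\mu_Q$ side, $\mu_Q([a,b)\cap[c,d))=\mu_Q\bigl([a\vee c,\,b\wedge d)\bigr)=\bigl(Q(b\wedge d)-Q(a\vee c)\bigr)^{+}$, using that $Q$ is nondecreasing and left-continuous, so that $\mu_Q$ assigns mass $Q(t)-Q(s)$ to each half-open interval $[s,t)$ (the convention of the construction cited in the footnote to Property~\ref{prop:Q}). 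On the push-forward side, $(F_\ast\lambda_{a,b})([c,d))=\lambda\bigl(\{x\in[Q(a),Q(b)]:c\le F(x)<d\}\bigr)$; by Lemma~\ref{lem:geninv}\ref{en:FFinviff} the conditions $F(x)\ge c$ and $F(x)<d$ are equivalent to $x\ge Q(c)$ and $x<Q(d)$, so this set is $[Q(a),Q(b)]\cap[Q(c),Q(d))$, whose Lebesgue measure --- by monotonicity of $Q$ and because the endpoints are null --- is again $\bigl(Q(b\wedge d)-Q(a\vee c)\bigr)^{+}$. The two sides agree, so the first assertion holds.

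For the second assertion I let $a\downarrow0$ and $b\uparrow1$. From the definitions one checks $Q(a)\downarrow\alpha_F$ and $Q(b)\uparrow\beta_F$ --- these are precisely the monotone limits of the infimum and supremum that define $\alpha_F$ and $\beta_F$ --- and that $0<F(x)<1$ for every $x\in(\alpha_F,\beta_F)$, so $h(F(x))$ is well defined there. For nonnegative $h$, both sides of the first assertion increase to the corresponding sides of the second, and monotone convergence finishes the argument. For $\mu_Q$-integrable $h$, applying the first assertion to $|h|$ gives $\int_{Q(a)}^{Q(b)}|h(F(x))|\,\mathrm{d}x\le\int_0^1|h|\,\mathrm{d}Q<\infty$ uniformly in $a,b$; this yields both integrability of $h\circ F$ on $(\alpha_F,\beta_F)$ and a dominating function, so dominated convergence finishes that case.

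I expect no substantive obstacle; the only place needing care is the interval-type bookkeeping --- reading the $\mu_Q$-mass of a half-open interval off the left-continuous $Q$ so that the $[a,b)$ and $\int_{Q(a)}^{Q(b)}$ conventions in the statement match, and allowing $\alpha_F,\beta_F$ to be infinite or to be attained by $Q$ on $(0,1)$, which affects only Lebesgue-null endpoints and is absorbed harmlessly. Everything else is the standard toolkit: uniqueness of measures on a $\pi$-system, the image-measure change of variables, and monotone and dominated convergence.
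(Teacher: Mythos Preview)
Your proposal is correct and follows essentially the same route the paper indicates: the paper simply says the lemma is straightforward from Theorem~3.6.1 in \cite{B07} (the image-measure change-of-variables formula) together with Lemma~\ref{lem:geninv}\ref{en:FFinviff}, and omits the details. Your argument supplies exactly those omitted details---identifying $\mu_Q$ on $[a,b)$ as the push-forward $F_\ast\lambda_{a,b}$ via the $\pi$-system check (which is where \ref{en:FFinviff} enters), invoking the image-measure formula, and passing to the limit with monotone/dominated convergence.
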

	Lemma \ref{lemma:LS} may be proved using Theorem 3.6.1 in \cite{B07}. We omit the details. Lemmas \ref{lem:geninv} and \ref{lemma:LS} facilitate a short proof of our third and final lemma.
	\begin{lemma}\label{lem:FQ}
		Let $F:\mathbb R\to[0,1]$ be a cdf, let $Q=F^{-1}$, and let $\mu_Q$ be the Lebesgue--Stieltjes measure on $(0,1)$ generated by $Q$. Then $F(Q(u))=u$ for $\mu_Q$-a.e.\ $u\in(0,1)$.
	\end{lemma}
	\begin{proof}
		Use Lemma \ref{lemma:LS} to write $\smallint_0^1\lvert F(Q(u))-u\rvert\,\mathrm{d}Q(u)=\smallint_{\alpha_F}^{\beta_F}\lvert F(Q(F(x)))-F(x)\rvert\,\mathrm{d}x$ and then use Lemma \ref{lem:geninv}\ref{en:FFinvF} to replace $F(Q(F(x)))$ with $F(x)$ in the latter integral.
	\end{proof}
	
\end{appendix}

\end{document}